\theoremstyle{plain}
\newtheorem{THEOREM}{Theorem}[section]
\newtheorem{theorem}[THEOREM]{Theorem}
\newtheorem{lemma}[THEOREM]{Lemma}
\newtheorem{proposition}[THEOREM]{Proposition}
\theoremstyle{definition}
\newtheorem{definition}[THEOREM]{Definition}
\theoremstyle{remark}
\newtheorem{example}[THEOREM]{Example}
\newcommand{\thm}[1]{Theorem~\ref{#1}}
\newcommand{\lem}[1]{Lemma~\ref{#1}}
\newcommand{\prop}[1]{Proposition~\ref{#1}}
\newcommand{\sect}[1]{Section~\ref{#1}}
\def \a {\alpha}
\def \b {\beta}
\def \g {\gamma}
\def \d {\delta}
\def \e {\varepsilon}
\def \k {\kappa}
\def \l {\lambda}
\def \n {\nabla}
\def \s {\sigma}
\def \t {\tau}
\def \th {\theta}
\def \bmm {{\bf m}}
\def \br {{\bf r}}
\def \bv {{\bf v}}
\def \bw {{\bf w}}
\def \bx {{\bf x}}
\def \by {{\bf y}}
\def \bz {{\bf z}}
\def \bth {\bm{\theta}}
\def \bE {{\bf E}}
\def \bF {{\bf F}}
\def \cA {\mathcal{A}}
\def \cB {\mathcal{B}}
\def \cD {\mathcal{D}}
\def \cF {\mathcal{F}}
\def \cG {\mathcal{G}}
\def \cR {\mathcal{R}}
\def \cV {\mathcal{V}}
\def \cW {\mathcal{W}}
\newcommand{\R}{\ensuremath{\mathbb{R}}}   
\renewcommand{\S}{\ensuremath{\mathbb{S}}} 
\newcommand{\fF}{\ensuremath{\mathbb{F}}}
\def \sign {\mathrm{sgn}}
\renewcommand{\geq}{\geqslant}
\renewcommand{\leq}{\leqslant}
\DeclareMathOperator{\conv}{conv} %
\DeclareMathOperator{\diam}{diam} %
\DeclareMathOperator{\diag}{diag} %
\DeclareMathOperator{\id}{id} %
\def \lan {\langle}
\def \ran {\rangle}
\def \p {\partial}
\def \ss {\subset}
\def \RL  {Rademacher's Lemma}
\def \DP{Duhamel's Principle}
\renewcommand{\geq}{\geqslant}
\renewcommand{\leq}{\leqslant}
\def \dxi  {\, \mbox{d}\xi}
\def \ds  {\, \mbox{d}s}
\def \dtau  {\, \mbox{d}\tau}
\def \ddt  {\frac{\mbox{d\,\,}}{\mbox{d}t}}
\begin{document}
	
\title[Grassmannian reduction of Cucker-Smale systems]{Grassmannian reduction of Cucker-Smale systems and dynamical opinion games}	

\author{Daniel Lear}

\author{David N. Reynolds}

\author{Roman Shvydkoy}

\address{Department of Mathematics, Statistics and Computer Science, University of Illinois at Chicago, 60607}

\email{lear@uic.edu}
\email{dreyno8@uic.edu}
\email{shvydkoy@uic.edu}

\subjclass{92D25, 35Q35}

\date{\today}

\keywords{Cucker-Smale system, Rayleigh friction, Brouwer topological degree, Lojasiewicz's gradient inequality,  opinion dynamics, non-cooperative games, Nash equilibrium}

\thanks{\textbf{Acknowledgment.}  
	This work was  supported in part by NSF
	grant DMS-1813351.}

\begin{abstract}
In this note we study a new class of alignment models with self-propulsion and Rayleigh-type friction forces, which describes the collective behavior of agents with individual characteristic parameters. We describe the long time dynamics via a new method which allows to reduce analysis from the multidimensional system to a simpler family of two-dimensional systems parametrized by a proper Grassmannian. With this method we demonstrate exponential alignment for a large (and sharp) class of initial velocity configurations confined to a sector of opening less than $\pi$. 

In the case when characteristic parameters remain frozen, the system governs dynamics of opinions for a set of players with constant convictions. Viewed as a dynamical non-cooperative game, the system is shown to possess a unique stable Nash equilibrium, which represents a settlement of opinions most agreeable to all agents. Such an agreement is furthermore shown to be a global attractor for any set of initial opinions.
\end{abstract}

\maketitle 

\section{Introduction and preliminaries}

In the mathematical theory of emergent dynamics it is often difficult to analyze systems that incorporate several counteracting forces. Yet, it is imperative to study multi-forced dynamics for application in complex biological or social systems. The famous C. Reynolds 3Zone model, which includes a trinity of repulsion-alignment-attraction forces, lays a basis for realistic computer simulation of flocks, \cite{Rey1987}. A combination of self-propulsion, Rayleigh's friction, and mutual attraction/repulsion proposed by D'Orsogna \emph{et al.} \cite{DOrsogna} produces  milling and double milling patterns -- natural formations in biological swarms such as schools of fish. Many other examples are included in these comprehensive surveys  \cite{ABFHKPPS,CCP2017,CFTV2010,DOrsogna}. 

Inclusion of alignment mechanisms into multi-forced systems has become the subject of many recent studies. The basic framework for alignment dynamics  is provided by the classical Cucker-Smale system introduced in \cite{CS2007a,CS2007b}:
 \begin{equation} \label{e:CS}	\left\{
  \begin{split}
	\dot{\bx}_i&=\bv_i, \quad  \bx_i  \in \R^n,\\
	\dot{\bv}_i&=\sum_{j=1}^Nm_j\phi(\bx_i - \bx_j)(\bv_j-\bv_i)+\bF_i , \quad  \bv_i \in \R^n.
	\end{split}\right.
	\end{equation}
Here, $\bx_i$'s are agents, $m_i$'s are masses (which can be interpreted as influence strengths), $\bv_i$'s are velocities, and $\phi$ stands for a radial positive decreasing smooth function encoding communication protocol between agents. For example,
\[
\phi(r) = \frac{\l}{(1 + r^2)^{\b/2}}.
\]
Inclusion of singular and more complex topological kernels is empirically relevant as well, and has been studied in \cite{CCMP2017,DKRT2018,MLK2019,MMPZ-survey,ST-topo,ST1}. 

Dynamics under confinement or potential interaction forces $\bF_i = -\frac1N \sum_j \n_{\bx_i} U(\bx_i -\bx_j)$  was described in detail by Shu and Tadmor \cite{ShuT2019anti,ShuT2019}, see also \cite{ST-multi} for multi-scale flocks, where flocking and aggregation was established for global convex attraction potentials $U$. Earlier, a repulsive force with kinetic prefactor 
\[
\bF_i = -\frac1N \sqrt{\cV_2}\sum_{j=1}^N \n_{\bx_i} U(\bx_i -\bx_j), \quad \cV_2 = \frac{1}{N^2} \sum_{i,j=1}^N |\bv_i- \bv_j|^2,
\] 
was proposed by Cucker and Dong \cite{CD2010} as a way to avoid collisions yet achieve flocking behavior. We note that no $N$-independent results are known either for pure repulsive potentials or the  3Zone model. However, alignment with an $N$-dependent rate (reflecting a small crowd case) can be achieved for non-degenerate communication, see Kim and Peszek \cite{KP2018}, and for degenerate communication, see Dietert and Shvydkoy \cite{DScorr} and text \cite{S-book}. Closer to the subject of this present work, S.Y. Ha, T. Ha and J.H. Kim \cite{Ha-friction}, consider Cucker-Smale system \eqref{e:CS} with self-propulsion and Rayleigh's friction force, $\bF_i = (\a-\b |\bv_i|^2)\bv_i$, showing alignment of solutions lying in the positive coordinate sector $\bv_i \in \R^n_+$ under absolute communication $\inf\phi >0$. This result will be broadly extended in \thm{t:sectorial} below.

In this paper we consider the Cucker-Smale system with friction and self-propulsion forcing which takes into account individual characteristics of agents:
\begin{equation}\label{e:F}
	\bF_i = \sigma(\theta_i-|\bv_i|^p)\bv_i,
\end{equation} 
here $\s$ is a strength parameter, $\theta_i>0$ is the characteristic parameter of the $i$'th agent which reflects either a permanent or slowly changing property. We therefore supplement  system \eqref{e:CS} with a slow alignment evolution of these parameters:
\begin{equation}\label{e:th}
\dot{\th}_i=\k \sum_{j=1}^Nm_j\phi(\bx_i - \bx_j)(\theta_j-\theta_i), \quad \th_i \in \R^+.
\end{equation}
We assume that $0\leq \k\ll 1$. The power $p>0$ in the friction part of the force \eqref{e:F} is kept arbitrary to allow flexibility of interpretation of the parameters. For example, $p=1$ corresponds to characteristic speeds, $p=2$ corresponds to energies, etc.

Our study is split into two radically different cases: $\k>0$ and $\k=0$. In the case $\k>0$ the values of characteristic parameters are converging at a slow rate towards their conserved average 
\[
\ddt \bar{\th} = 0,\quad  \bar{\th} = \sum_{i=1}^N m_i \th_i.
\]
So, the forces themselves behave asymptotically similar $\bF_i \sim \s(\bar{\th} - |\bv_i|^p)\bv_i$. These counter the effects of alignment, however, when velocities point in opposite directions $\bv_i \sim - \bv_j$. If communication $\phi$ is weak, the alignment force may never overcome the self-propulsion. Such limitation on the dynamics of alignment for general solutions has already been recognized by Ha et al \cite{Ha-friction}, see Example~\ref{ex:Ha} below. However, for solutions confined to a conical sector $\bv_i \in \Sigma$ lying above a hyperspace, i.e. of angular opening less than $\pi$ (this condition is preserved by the sectorial maximum principle, see \lem{l:sectormax}), such configuration isn't possible. Hence, alignment may in fact overpower. Indeed, we prove exactly this statement under the classical Cucker-Smale assumption on communication kernel.
\begin{theorem}\label{t:sectorial} Let $\k>0$, and 
suppose the kernel satisfies
	\begin{equation}\label{e:fat}
		\phi(r) \geq \frac{\l}{(1+r^2)^{\b/2}}, \quad \b \leq 1.
	\end{equation}
Then every sectorial solution to system \eqref{e:CS}-\eqref{e:F}-\eqref{e:th} aligns and flocks exponentially fast to a pair $(\bar{\bv},\bar{\th})$ with $|\bar{\bv}| = \bar{\th}^{1/p}$:
\[
\begin{split}
\max_i \left\lbrace |\bv_i(t) - \bar{\bv}|+|\th_i(t) - \bar{\th}|\right\rbrace & \leq C e^{-\d t},\\
\diam\{\bx_i(t)\}_{i=1}^N & \leq \bar{\cD} <\infty,\  \mbox{ and } \ \bx_i(t) - \bx_j(t) \to \bar{\bx}_{ij},
\end{split}
\]
where $\d>0$ is proportional to $\k$. 
\end{theorem}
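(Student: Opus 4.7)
The plan is to bootstrap from sectorial confinement to full flocking through four coupled estimates. The starting point is the sectorial maximum principle, \lem{l:sectormax}, which keeps every $\bv_i(t)$ inside a closed conical sector $\Sigma$ of opening strictly less than $\pi$; hence there exist $\bn \in \S^{n-1}$ and $c_0 > 0$ with $\bn\cdot\bv_i \geq c_0|\bv_i|$ for all $i$ and $t$. Combined with the Rayleigh structure of $\bF_i$ and the pure-averaging character of \eqref{e:th}, which preserves the bounds $\min_j\theta_j(0)\leq \theta_i(t)\leq \max_j\theta_j(0)$, this yields uniform two-sided magnitude bounds $0<r_{-}\leq|\bv_i(t)|\leq r_{+}$.

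Next, I control the position diameter $\cD(t)=\max_{i,j}|\bx_i-\bx_j|$. Since $|\bv_i|\leq r_+$, the Rayleigh forcing is uniformly bounded, and a standard Cucker--Smale Lyapunov argument for the velocity spread $\cV(t)=\max_{i,j}|\bv_i-\bv_j|$ coupled with $\dot\cD\leq\cV$ gives a differential inequality whose integration under the fat-tail assumption $\beta\leq 1$ yields $\cD(t)\leq\bar{\cD}$ for all $t$. Consequently $\phi(\bx_i-\bx_j)\geq\phi_0:=\phi(\bar{\cD})>0$, and \eqref{e:th} becomes a connected linear averaging, giving $\max_i|\theta_i(t)-\bar{\theta}|\leq Ce^{-\kappa\phi_0 t}$.

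The heart of the proof is exponential alignment of the velocities toward the equilibrium pair $(\bar{\bv},\bar{\theta})$ with $|\bar{\bv}|=\bar{\theta}^{1/p}$ at rate $\delta$ proportional to $\kappa$. Here the Grassmannian reduction announced in the abstract enters, replacing the $n$-dimensional analysis with a family of 2D systems parametrized by a Grassmannian of planes naturally associated with $\Sigma$. In each such 2D reduction the sectorial confinement becomes angular confinement in an arc of length strictly less than $\pi$; a polar decomposition $\bv_i=r_i(\cos\phi_i,\sin\phi_i)$ then separates a radial part, for which the Rayleigh friction is contracting with attractor $r_i^p=\theta_i$, from an angular part, for which Cucker--Smale alignment on an arc bounded away from antipodes is contracting. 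A weighted Lyapunov functional $E(t)$ assembling the angular spread, the radial error $\bigl||\bv_i|^p-\theta_i\bigr|$, and the $\theta$-spread from the previous step satisfies $\dot E\leq -\delta E$ with $\delta\sim\kappa\phi_0$; transferring the estimate back through the Grassmannian yields the full alignment rate in $\R^n$.

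The main obstacle is the sign-indefinite contribution of the Rayleigh force to any direct velocity energy inequality: $\sigma(\theta_i-|\bv_i|^p)\bv_i$ acts as a local source when $|\bv_i|^p<\theta_i$ and can amplify velocity differences in anti-aligned configurations, as illustrated in \cite{Ha-friction}. The sectorial assumption is precisely what rules out such obstructive configurations, and the technical work is to show that within $\Sigma$, once the radial error is absorbed into the Lyapunov functional, the Rayleigh contribution is dominated by the combined alignment and $\theta$-averaging contractions, yielding the exponential rate $\delta\propto\kappa$ asserted in the theorem.
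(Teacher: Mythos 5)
Your outline has the right ingredients — sectorial confinement, two-sided speed bounds, a 2D Grassmannian reduction for the angular part, a radial/amplitude estimate, and the $\theta$-averaging — but the \emph{order} in which you assemble them hides a genuine gap, and that gap is exactly what the paper's bootstrap is designed to bridge.

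You claim, before doing any Grassmannian reduction, that ``a standard Cucker--Smale Lyapunov argument for the velocity spread $\cV(t)$ coupled with $\dot\cD\leq\cV$'' already yields $\cD(t)\leq\bar{\cD}$. This does not go through. The Rayleigh force is not a decaying perturbation of the aligning system: it contributes an $O(1)$, sign-indefinite term to $\dot\cV$ (indeed you yourself note later that it can amplify velocity differences). With only $\dot\cV\leq-M\phi(\cD)\cV+C$ and a fat-tail kernel, one cannot conclude $\cD$ stays bounded --- and if $\cD$ grows, $\phi(\cD)\to 0$, and the $\theta$-averaging rate you use next degenerates too. Your next step, $\max_i|\theta_i-\bar\theta|\leq Ce^{-\kappa\phi_0 t}$, then rests on a $\phi_0>0$ you have not established. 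So the chain ``bounded diameter $\Rightarrow$ exponential $\theta$-alignment $\Rightarrow$ Lyapunov contraction'' is circular unless the diameter bound is justified independently of the alignment estimates.

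The paper resolves this by reversing the order and accepting that $\cD$ may a priori grow. One first does the Grassmannian reduction \emph{without} assuming $\cD$ bounded: projecting the unit-vector system onto 2D planes through the sector axis eliminates the longitudinal force entirely, and the resulting planar angular system has a manifestly sign-definite right-hand side, giving $\frac{d}{dt}(1-\cos\gamma^{2D})\leq -cM\phi(\cD)(1-\cos\gamma^{2D})$ with the time-dependent $\phi(\cD)$ still inside. Feeding in the crude bound $\cD(t)\lesssim t$ and the fat tail $\beta\leq 1$ one integrates $\phi(\cD)$ to get a stretched-exponential (or, for $\beta=1$, algebraic) decay of $1-\cos\gamma$, then of $\cB$, then of the ratio quantity $\cR-1$, then of $\cA$, and only \emph{then} concludes $\cD$ is bounded --- at which point all the rates upgrade to genuine exponentials on a second pass. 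Your single ``weighted Lyapunov functional $E$ with $\dot E\leq -\delta E$'' presupposes a fixed rate $\delta\sim\kappa\phi_0$, i.e.\ it already presupposes the conclusion $\cD\leq\bar\cD$. Unless you supply an independent diameter bound (there is none under a fat tail with a nondecaying force), this step must be replaced by the bootstrap.

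Two smaller remarks. First, the Grassmannian reduction needs to be stated more precisely to be verifiable: after rotating so the sector axis is $e_n$, you project onto 2D planes containing the $x_n$-axis; this choice is essential because it simultaneously (i) preserves the lower bound on $|\bv_i^\Pi|$, (ii) preserves the sectorial confinement in the plane, and (iii) yields the elementary comparison $\gamma\leq\gamma^{2D}$. A generic ``family of 2D systems parametrized by a Grassmannian of planes naturally associated with $\Sigma$'' does not pin these down. Second, after angular and radial alignment you still need an argument that each $\bv_i$ converges to a single limit $\bar\bv$ with $|\bar\bv|=\bar\theta^{1/p}$; the paper gets this by showing $\bar\theta-|\bv_\pm|^p$ decays exponentially (a forced logistic ODE bounded away from $0$), whence $\dot\bv_i$ is exponentially small and the limits exist and coincide. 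Your writeup asserts the limiting pair but does not produce it.
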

Example~\ref{ex:fat} shows that the fat tail condition \eqref{e:fat} is sharp for sectorial solutions, just as it is sharp in the classical case, see \cite{S-book}. Note that due to lack of conservation of momentum, the direction of the vector $\bar{\bv}$ cannot be determined from initial data, and is thus an emergent property of dynamics. 

The proof of \thm{t:sectorial} is based on a new Grassmannian reduction method. We seek to eliminate the longitudinal forcing $\bF_i$ by considering a system for angles $\cos(\g_{ij}) = \bv_i \cdot \bv_j / |\bv_i||\bv_j|$. The resulting alignment term, however, loses the same diffusion property as the original one in multi-D, but it does retain it in 2D. We thus project the system onto 2D planes $\Pi$, obtaining a family of Cucker-Smale type systems, parametrized by a subgroup of the Grassmannian 
$\cG(2,n)$. We achieve angular alignment on each plane, and conclude by a  geometric argument uplifting the dynamics to the original system. With the angular alignment at hand we make another radial reduction of the system by looking at the evolution of $\cR = \max_{i',i''} \frac{|\bv_{i''}|^2}{|\bv_{i'}|^2}$. Bootstrapping on the rate of convergence $\cR \to 1$ we achieve exponential radial alignment as well.

For permanent characteristic parameters, when $\k = 0$, the same Grassmannian reduction applies to show that all sectorial solutions converge to a fixed ray $\bv_i/ |\bv_i| \to \tilde{\bv} \in \S^{n-1}$. Thus, the dynamics becomes essentially 1-dimensional, tracking the values of speeds $|\bv_i| = y_i>0$. The rest of the dynamics becomes radically different from the $k>0$ case. Since $\th_i$'s remain constant one does not expect to obtain convergence to the same values $y_i \to y_i^*$, thus the flock will inevitably spread out. So, to preserve the balance of forces one is lead to consider strong communication at a long range, $\inf \phi >0$. For simplicity we further strip the dependence on position variables $\bx_i$ by considering uniform kernel $\phi = 1$, in which case the system reduces to
\begin{equation}\label{e:opinionYintro}
 \dot{y}_i = \sum_{k=1}^N m_k (y_k - y_i ) + \s (\th_i - y_i^p) y_i.
\end{equation}
This scalar system represents a model of dynamical game of opinions in which  opinions $y_i$'s change to achieve the best agreement given a varied set of conviction values $\th_i$'s. Such an agreement $\by^*$ can be interpreted as a Nash equilibrium, see \cite{Nash}, for the non-cooperative game of $N$ players aiming to maximize the corresponding pay-offs
\begin{equation}\label{e:payoff}
p_i(\by) = \s \left( \frac12 \th_i y_i^2 - \frac{1}{p+2} y_i^{p+2}\right) - \frac{M}{2} \left( \bar{y} - y_i\right)^2, \quad \bar{y} = \frac1M \sum_j m_j y_j.
\end{equation}
Our main result shows that $\by^*$ exists, is unique and is a global attractor for the system.
\begin{theorem}\label{t:opinion} For any positive set of parameters $(\bth,\bmm, \s)\in  \R^N_+ \times \R^N_+ \times \R_+$ there exists a unique stable Nash equilibrium $\by^* \in \R^N_+$ of system \eqref{e:opinionYintro}. Moreover, any positive solution $\by(t) \in \R^N_+$ converges to $\by^*$ as $t \to \infty$.
\end{theorem}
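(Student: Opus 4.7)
The plan is to exploit a weighted-gradient structure of system \eqref{e:opinionYintro}. A direct calculation shows that
\[
  m_i \dot y_i = -\partial_{y_i}V(\by), \qquad V(\by) = \frac14 \sum_{i,j=1}^N m_im_j(y_i-y_j)^2 + \sigma\sum_{i=1}^N m_i\left(\frac{y_i^{p+2}}{p+2} - \frac{\theta_i y_i^2}{2}\right),
\]
so that $\dot V = -\sum_i m_i \dot y_i^2 \le 0$ along trajectories. The functional $V$ is real-analytic and coercive on the closed orthant $\overline{\R^N_+}$, since its leading term at infinity is $\sigma\sum_i m_i y_i^{p+2}/(p+2)$. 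Moreover, $\R^N_+$ is forward-invariant because on the face $\{y_i=0\}$ one has $\dot y_i = M\bar y \ge 0$. Consequently every trajectory stays in a compact sublevel set of $V$ and is globally defined.

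For existence of an equilibrium I would minimize $V$ on $\overline{\R^N_+}$. The boundary identity $\partial_{y_i}V|_{y_i=0} = -m_i M\bar y \le 0$ pushes the gradient strictly into the interior, so every minimizer is an interior critical point $\by^*\in\R^N_+$. Computing $\partial_{y_i}p_i$ with $\bar y$ treated as exogenous to player $i$ (the myopic best-response convention appropriate for this game) shows that the critical-point equation for $V$ coincides with the Nash stationarity condition for the payoffs \eqref{e:payoff}.

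The crucial and most delicate step is uniqueness, which I would establish by a Brouwer degree argument combined with a Hessian computation. On a truncated domain $\{\e < y_i < R\}$ the outer normal derivative of $V$ is strictly positive for $R$ large (by coercivity) while the inner-face normal derivative is strictly negative (by the boundary identity above) for $\e$ small, so $\nabla V$ is outward-pointing on the full boundary and the Brouwer degree of $\nabla V$ on this region equals $1$. Each nondegenerate zero contributes local index $\sign\det H(\by^*)$, where the Hessian decomposes as
\[
  H(\by^*) = M\,\diag(m_i f_i'(y_i^*)) - \bmm\bmm^T, \qquad f_i(y) = y\left(1 + \tfrac{\sigma}{M}(y^p - \theta_i)\right).
\]
Multiplying the equilibrium relation by $m_i$ and by $m_i y_i^*$ respectively and summing yields the two identities
\[
 \sum_i m_i(\theta_i - (y_i^*)^p)y_i^* = 0, \qquad \sigma\sum_i m_i(\theta_i - (y_i^*)^p)(y_i^*)^2 = \tfrac12\sum_{i,j}m_im_j(y_i^* - y_j^*)^2 \ge 0,
\]
from which, together with a case analysis depending on whether $\sigma\theta_i$ exceeds $M$, one can verify that $f_i'(y_i^*) > 0$ for every $i$ and that the rank-one determinant identity $\det H = M^N\prod_i m_if_i'(y_i^*)\bigl(1 - M^{-1}\sum_i m_i/f_i'(y_i^*)\bigr)$ is strictly positive. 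Each equilibrium is then a nondegenerate minimum with local index $+1$, and the degree count forces uniqueness and stability. The verification of Hessian positivity at \emph{every} equilibrium, and particularly the control of $f_i'(y_i^*)$ at indices where $\theta_i$ is large compared to $(y_i^*)^p$, is the main obstacle of the proof.

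Finally, global attraction follows from Lyapunov-Lojasiewicz machinery. Since $V$ is real-analytic and $\by^*$ is the unique critical point, the Lojasiewicz gradient inequality $|V - V(\by^*)|^{1-\alpha}\le C|\nabla V|$ holds near $\by^*$, and combined with $\int_0^\infty \sum_i m_i\dot y_i^2\,dt = V(\by(0)) - V(\by^*) < \infty$ and boundedness of the trajectory, it upgrades the finite $L^2$-norm of $\dot\by$ to finite $L^1$, yielding $\by(t) \to \by^*$ as $t\to\infty$ for any $\by(0)\in\R^N_+$.
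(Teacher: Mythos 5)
Your proposal follows the same broad architecture as the paper — gradient structure, Brouwer degree for existence/uniqueness, and a Lojasiewicz--Simon argument for convergence — but differs in several technical choices, and it leaves the decisive step incomplete.

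On the parts that work: the weighted potential $V$ you write down is correct and is the pullback of the paper's $\Phi$ under the rescaling $z_i = \sqrt{m_i}\,y_i$ (the paper's displayed formula for $\Phi(\bz)$ contains a sign typo in the middle term; your $V$ has the correct sign). Your rank-one determinant identity $\det H = M^N\prod_i m_i f_i'(y_i^*)\,(1 - M^{-1}\sum_i m_i/f_i'(y_i^*))$ is exactly the paper's formula $\det D_{\by}\fF = \prod d_i(1-\sum m_k/d_k)$, up to the diagonal conjugation $H = \diag(m_i)\,D_{\by}\fF$, $d_i = M f_i'(y_i^*)$. Your degree computation via Poincaré--Hopf (outward-pointing $\nabla V$ on a box, hence degree $=\chi=1$) is a clean alternative to the paper's homotopy $\t\mapsto \fF_{\t\bth+(1-\t)\hat{\bth}}$ toward a diagonal conviction vector. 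Minimizing $V$ on the closed orthant to produce an interior critical point is also fine, though once the degree and index computations are in place, existence is automatic and the minimization is redundant.

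The genuine gap is exactly what you flag as ``the main obstacle'': the proof that $f_i'(y_i^*) > 0$, equivalently $d_i := M + \s(p+1)(y_i^*)^p - \s\th_i > 0$, at \emph{every} equilibrium. You propose to extract this from the two aggregate identities $\sum_i m_i(\th_i-(y_i^*)^p)y_i^* = 0$ and $\s\sum_i m_i(\th_i-(y_i^*)^p)(y_i^*)^2 \ge 0$. Those identities are correct, but they give only averaged information and cannot by themselves deliver the pointwise bound you need, so this route as written does not close. The paper closes it via Lemma~\ref{l:structure}, establishing $y_i^p \ge \th_i + (m_{\ge i}-M)/\s$ from the ordering of the $y_i$'s, and then deriving a contradiction from $d_i \le 0$. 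In fact, your instinct that the argument should split on the sign of $\s\th_i - M$ is the right one, and there is a direct route that needs neither the aggregate identities nor the ordering lemma: from the $i$-th equilibrium equation $\s (y_i^*)^{p+1} = M\bar y + (\s\th_i - M)y_i^*$ one computes
\[
d_i \, y_i^* \;=\; (p+1)\,M\bar y + p\,(\s\th_i - M)\,y_i^*,
\]
which is obviously positive when $\s\th_i \ge M$, while for $\s\th_i < M$ the same equilibrium equation forces $M\bar y > (M-\s\th_i) y_i^*$, so $(p+1)M\bar y > (p+1)(M-\s\th_i)y_i^* > p(M-\s\th_i)y_i^*$ and again $d_i > 0$. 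Once this is supplied, the determinant and leading-minor positivity follow as in the paper (using $1/d_i = y_i/(M\bar y) - p\s y_i^{p+1}/(M\bar y\, d_i) < y_i/(M\bar y)$), and your degree, stability and Lojasiewicz steps go through. Finally, two minor observations: a positive determinant alone does not give positive definiteness of $H$; you should either check the leading minors as the paper does, or invoke the fact that the unique critical point must be the global minimizer of the coercive analytic $V$. And for the unperturbed ODE \eqref{e:opinionYintro}, LaSalle's invariance principle already yields convergence once uniqueness of the interior critical point is known; the full Lojasiewicz machinery is only essential in the paper's Theorem~\ref{t:unique} where the exponentially decaying perturbation $\bE(t)$ is present.
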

In \thm{t:unique} we show that in fact \eqref{e:opinionYintro} captures dynamics of the general multi-dimensional case: any sectorial solution to \eqref{e:CS} - \eqref{e:F} with $\k = 0$ and $\phi \equiv 1$ settles along a ray with speeds given by the values of the Nash equilibrium: $\bv_i \to y_i^*\, \tilde{\bv}$,\, $\tilde{\bv}\in \S^{n-1}$.

The proof incorporates several new tools in the context of collective behavior models including the Brouwer topological degree and Lojasiewicz-Simon gradient method. We note that the convergence  is not a consequence of the classical Lyapunov theory as the orbits may undergo slow transient dynamics before settling near the equilibrium.

The paper is organized as follows. In \sect{s:prelim} we explore basic properties of the system and its solutions, including the important Sectorial Maximum and Minimum Principles. In \sect{s:abs} we consider the general case of absolute communication $\inf \phi >0$ and positive coupling $\k>0$, and show unconditional alignment of any solution if the communication is strong enough to overcome adverse velocity directions, see \thm{t:altR}. Our scheme here follows that of \cite{Ha-friction} but with generalizations related to arbitrary characteristic parameters and $p>0$.  The main result of \sect{s:sect} is alignment of sectorial solutions and proof of \thm{t:sectorial} via Grassmannian reduction. \sect{s:opinion} is devoted to the study of opinion dynamics and proof of \thm{t:opinion}.

We commonly use bold notation for various vectorial quantities, such as $\bv,\bx\in \R^n$, but also for collective designation of parameters of the system
\[
\bth = (\th_1,\ldots,\th_N), \quad \bmm = (m_1, \ldots, m_N).
\]
The following shortcuts are used throughout
\begin{equation*}
\bx_{ij} = \bx_i-\bx_j, \quad  \bv_{ij} = \bv_i-\bv_j, \quad  \phi_{ij} =	\phi(\bx_i-\bx_j).
\end{equation*}
We denote by $|\cdot|$ the Euclidean norm on $\R^n$ for definiteness, although our results do not depend on this particular choice. And we use $\| \cdot\|$ to denote the Euclidean norm on $\R^N$.  We use tildes as in $\tilde{\br} = \frac{\br}{|\br|}$ to denote unit vectors.

\section{Preliminaries, sectorial principles}\label{s:prelim}
Let us note a few basic properties of the system \eqref{e:CS}-\eqref{e:F}-\eqref{e:th}, which we will write in full here for future reference
\begin{equation}\label{e:CSF}	
 \left\{
\begin{split}
	\dot{\bx}_i&=\bv_i, \quad  \bx_i = \lan x_i^1,\ldots,x_i^n \ran  \in \R^n,\\
	\dot{\bv}_i&=\sum_{j=1}^Nm_j\phi(\bx_i - \bx_j)(\bv_j-\bv_i)+ \sigma(\theta_i-|\bv_i|^p)\bv_i, \quad  \bv_i = \lan v_i^1,\ldots,v_i^n \ran \in \R^n\\
	\dot{\th}_i &=\k \sum_{j=1}^Nm_j\phi(\bx_i - \bx_j)(\theta_j-\theta_i),\quad \th_i \in \R^+.
\end{split}\right.
\end{equation}

The system has mirror symmetry under transformation for each $k=1,\ldots,n$,
\begin{equation*}\label{e:mirror}
v_i^k \to -v_i^k, \qquad \th_i \to \th_i, \qquad x_i^k \to - x_i^k, \quad \forall i=1,\ldots,N.
\end{equation*}
The system is rotationally invariant: for any orthogonal transformation $U\in O(n)$,
\begin{equation}\label{e:rot}
\bv_i \to U \bv_i, \qquad \bx_i \to U \bx_i, \qquad \th_i \to \th_i
\end{equation}
is another solution.

Looking at other dynamically changing quanities, we note that the alignment force is dissipative, but the self-propulsion mechanism can increase the energy  if $\th$'s are high and $|\bv_i|$'s are low, or friction can decrease it if $|\bv_i|$'s are high. The $\th$-equation clearly obeys the Maximum Principle, while, again for the momentum equation it may fail. However, velocities do remain in certain bounds determined by initial conditions: denoting $|\bv_+| = \max_i |\bv_i|$, and $\th_+ = \max_i \th_i$ we see that  (by \RL)
\[
\ddt |\bv_+| \leq \s(\th_+(0) - |\bv_+|^p) |\bv_+|,
\]
and hence,
\begin{equation*}\label{e:upperv}
\ddt |\bv_+|^p \leq p\s(\th_+(0) - |\bv_+|^p) |\bv_+|^p.
\end{equation*}
Solving this logistic inequality directly we obtain
\begin{equation*}\label{e:max}
|\bv_+| \leq C,
\end{equation*}
where $C$ depends only on the initial conditions and parameters of the system. 

The system remains on one side of a hyperplane if initially so. Indeed, let $\ell \in \R^n$ be a unit functional whose kernel defines a hyperplane $\Pi_\ell = \ker \ell$. Suppose that all $\ell(\bv_i(0)) \geq 0$. Let us find an $i = i(t)$ such that $\ell(\bv_i) = \min_j \ell(\bv_j)$. Then by \RL\ we have
\[
\ddt \ell(\bv_i) = \sum_k m_k \phi_{ik} (\ell(\bv_k) - \ell(\bv_i)) + \sigma \ell(\bv_i)(\theta_i-|\bv_i|^p) \geq \sigma \ell(\bv_i)(\theta_i-|\bv_i|^p).
\]
Integrating we obtain
\[
\ell(\bv_i) \geq \ell(\bv_i(0)) e^{c(t)} \geq 0.
\]
An implication of this is the following principle.
\begin{lemma}[Sectorial Maximum Principle]\label{l:sectormax}
Any solution to \eqref{e:CS}-\eqref{e:F}-\eqref{e:th} with initial velocities starting in a sector 
\[
\Sigma_\cF = \bigcap_{\ell \in \cF} \{\bv: \ell(\bv) \geq 0 \}.
\]
will remain in the same sector for all times.
\end{lemma}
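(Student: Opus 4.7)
The plan is to reduce the statement to the one-hyperplane case sketched in the paragraph preceding the lemma, and then intersect. Since $\Sigma_\cF = \bigcap_{\ell \in \cF}\{\bv:\ell(\bv)\geq 0\}$, preservation of membership in $\Sigma_\cF$ is equivalent to the statement that for each fixed unit functional $\ell \in \cF$ the condition $\min_i \ell(\bv_i(t)) \geq 0$ is propagated in time. So I would fix $\ell$ once and for all and work with the scalar quantity $m(t) = \min_{1 \leq j \leq N} \ell(\bv_j(t))$.

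Next, I would observe that $m(t)$ is locally Lipschitz as a minimum of finitely many smooth functions, so by Rademacher's Lemma it is differentiable almost everywhere; at any such $t$ one may pick an index $i = i(t)$ realizing the minimum and compute $\dot m(t) = \frac{d}{dt}\ell(\bv_i)$. Using the velocity equation of \eqref{e:CSF} and linearity of $\ell$,
\[
\frac{d}{dt}\ell(\bv_i) = \sum_k m_k \phi_{ik}\bigl(\ell(\bv_k) - \ell(\bv_i)\bigr) + \sigma(\theta_i - |\bv_i|^p)\,\ell(\bv_i).
\]
By the choice of $i$, each difference $\ell(\bv_k) - \ell(\bv_i)$ is non-negative, so the alignment sum contributes a non-negative term. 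The forcing reduces to a scalar multiple of $\ell(\bv_i) = m(t)$. Setting $c(t) = \sigma(\theta_{i(t)} - |\bv_{i(t)}|^p)$, which is locally bounded thanks to the \emph{a priori} bound $|\bv_+| \leq C$ already derived and the obvious Maximum Principle for the $\theta$-equation, we obtain the differential inequality $\dot m(t) \geq c(t) m(t)$ a.e.

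Gronwall integration then gives $m(t) \geq m(0)\exp\bigl(\int_0^t c(s)\,ds\bigr)$, and since $m(0) \geq 0$ by hypothesis the right-hand side is non-negative for all $t$ in the interval of existence. Intersecting over all $\ell \in \cF$ delivers the lemma. I do not expect any genuine obstacle: the only mildly delicate point is legitimizing the pointwise differentiation when the minimizing index switches, but this is precisely what Rademacher's Lemma (already invoked earlier in the section for analogous bounds on $|\bv_+|$) is designed to handle.
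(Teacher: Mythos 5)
Your proposal is correct and follows exactly the argument the paper gives in the paragraph immediately preceding the lemma: fix a unit functional $\ell$, apply Rademacher's Lemma to the minimum $\min_j \ell(\bv_j)$, drop the non-negative alignment sum, and integrate the resulting logistic-type scalar inequality to preserve the sign of $\ell(\bv_i)$, then intersect over $\ell \in \cF$. Your added remarks on why the coefficient $c(t)$ is locally bounded and on the a.e.\ differentiation at index switches are sound elaborations of what the paper leaves implicit, not a different route.
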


An important family of solutions are solutions that lie strictly above one side of a hyperplane $\ell(\bv_i)>0$. In this case, the velocities also belong to a slightly narrower sector defined by the  span of the initial velocity vectors:
\[
\R^+ \times \conv\{\bv_i\}_{i=1}^N \ss \{ \bv: \ell(\bv) \geq \e |\bv| \}, \mbox{ for some } \e>0.
\]
By rotation invariance \eqref{e:rot} we can assume without loss of generality that the solution lies above the coordinate plane $\Pi_n = \{x_n = 0\}$. Thus, we have
\begin{equation}\label{e:sect}
	v_i^n \geq \e |\bv_i|, \quad \forall i = 1,\ldots,N.
\end{equation}

\begin{definition}
	We call solutions satisfying \eqref{e:sect}  {\em sectorial}. 
\end{definition}
\begin{lemma}[Sectorial Minimum Principle]
	All sectorial solutions stay bounded away from zero,  $\min_i |\bv_i| \geq c_0$, for all time.
\end{lemma}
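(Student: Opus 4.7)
The plan is to reduce the problem to bounding from below the scalar $v^n_- := \min_i v_i^n$, because the sectorial condition \eqref{e:sect} together with the trivial bound $|\bv_i| \ge v_i^n$ gives $\min_i |\bv_i(t)| \ge v^n_-(t)$, so any positive lower bound for $v^n_-$ transfers directly to the claim. At each time $t$, I would pick an index $i = i(t)$ attaining this minimum and apply \RL\ to the $n$-th component of the velocity equation in \eqref{e:CSF}:
\[
\ddt v^n_- = \sum_{j=1}^N m_j \phi_{ij}(v_j^n - v_i^n) + \s(\th_i - |\bv_i|^p)\, v_i^n.
\]
The alignment sum is non-negative since $v_i^n$ is minimal, so everything reduces to controlling the forcing term from below.

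Two ingredients convert this forcing into a logistic structure. First, the sectorial inequality $v_i^n \ge \e |\bv_i|$ yields the key bound $|\bv_i|^p \le (v^n_-)^p/\e^p$, taming the otherwise indefinite friction. Second, the $\th$-equation in \eqref{e:CSF} is a pure consensus dynamics, so exactly the maximum-principle argument already used in the excerpt, run at the minimum instead, gives $\th_i(t) \ge \th_-(0) := \min_j \th_j(0) > 0$. Multiplying the resulting bound by $v^n_- > 0$ produces the logistic differential inequality
\[
\ddt v^n_- \ge \s\left(\th_-(0) - \frac{(v^n_-)^p}{\e^p}\right) v^n_-.
\]

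The right-hand side vanishes at $y^* := \e\, \th_-(0)^{1/p}$ and is strictly positive on $(0, y^*)$. A standard ODE comparison therefore prevents $v^n_-$ from ever dropping below $\min\{v^n_-(0),\, y^*\}$: if $v^n_-$ starts above $y^*$, the inequality forces $\ddt v^n_- \ge 0$ the instant it touches $y^*$, blocking further descent; if it starts below $y^*$, it is non-decreasing. Setting $c_0 := \min\{v^n_-(0),\, y^*\} > 0$ then gives $\min_i |\bv_i(t)| \ge v^n_-(t) \ge c_0$ for all $t$.

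There is no serious obstacle beyond recognizing the right observation: the sectorial confinement converts the a priori sign-indefinite friction-propulsion term into a logistic barrier, since the upper control $|\bv_i|^p \le (v^n_-)^p/\e^p$ ensures that whenever $v_i^n$ becomes small, the friction $-\s|\bv_i|^p v_i^n$ is automatically dominated by the propulsion $\s \th_i v_i^n$, preventing further decay. Without a strict sector this coupling between $|\bv_i|$ and $v_i^n$ is lost, and no uniform lower bound of this kind is available.
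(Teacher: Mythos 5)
Your proof is correct and takes essentially the same route as the paper: apply Rademacher's lemma to the minimal $n$-th component $v^n_-$, drop the non-negative alignment sum, use the sectorial bound $|\bv_i|\le v_i^n/\e$ together with $\th_i\ge\th_-(0)$ to land on the logistic inequality $\ddt v^n_- \ge \s\, v^n_-\bigl(\th_-(0)-\e^{-p}(v^n_-)^p\bigr)$, and conclude by ODE comparison. You merely make explicit two steps the paper leaves implicit (the reduction $\min_i|\bv_i|\ge v^n_-$ and the consensus maximum principle giving $\th_i\ge\th_-(0)$); the substance is identical.
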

\begin{proof}
	Indeed, denote $v_i^n = \min_{k} v_k^n$. Then
	\begin{equation*}\label{e:lowv}
			\ddt {v}_i^n = \sum_k m_k \phi_{ik} (v_k^n - v_i^n) + \s   v_i^n (\th_i - |\bv_i|^p) \geq \s   v_i^n (\th_- - \e^{-p} (v_i^n)^p).
	\end{equation*}
	Solving this logistic ODE shows that $v_i^n$ remains bounded away from $0$.
\end{proof}

\section{Dynamics under absolute communication}\label{s:abs}
In this section we consider the case of absolute kernel with 
\[
\phi_*  = \inf_{r \geq 0} \phi(r) >0. 
\]
We define
\[
\cA=\max_{i,j=\overline{1,N}}|\bv_{ij}|, \quad \cB=\max_{i,j=\overline{1,N}}|\theta_{ij}|.
\]
We often use the maximizing functional formulation for $\cA$:
\[
\cA =\max_{|\ell|=1, i,j=\overline{1,N}}\ell(\bv_{ij}).
\]
First, looking at $\cB$ we see that,
\begin{align*}
\ddt\cB\leq - \k \phi_*M\cB, \quad M = m_1+\ldots+m_N,
\end{align*}
which implies that $\theta_i \to \bar{\theta}$ exponentially fast.

Before we proceed let us review the following illustrative example, which essentially appeared in \nolinebreak\cite{Ha-friction}.

\begin{example}\label{ex:Ha}
	Let us assume that we have a global communication $\phi \equiv \l>0$, and consider a two-agent system on the line where $v= v_1 = -v_2 >0$. Then we have the system
	\[
	\dot{x} = v, \qquad \dot{v} = - \l v + \s v(1-v^2).
	\]
	The equation can be solved explicitly.    If the Cucker-Smale communication is weak, $\l <\s$,  then the solution is given by
	\[
	v = \frac{\sqrt{1 - \frac{\l}{\s}} }{\sqrt{ 1 + c_0^2 e^{-2t (\s-\l)}}}.
	\]
	So, as we can see even global communication is not sufficient to provide alignment in this case.  
	
	When $\l = \s$, we obtain
	\[
	v(t) = \frac{v_0}{ \sqrt{ 2\s t v_0^2 + 1}}.
	\]
	Hence, the solution aligns to $0$, and does so only algebraically fast. It clearly does not converge to the natural value $v = 1$.  At the same time we can see that the agents diverge, $x(t) \sim \sqrt{t}$. So, no flocking occurs either.  
	
	Finally, when $\l > \s$ we obtain a positive alignment result
	\[
	v = \frac{c_0 e^{(\s-\l)t}\sqrt{ \frac{\l}{\s} -1 } }{\sqrt{ 1 - c_0^2 e^{2t (\s-\l)}}}.
	\]
	So, in this case $v \to 0$ exponentially fast and flocking ensues.  
\end{example} 

This example sets the stage for what happens for general solutions of the system \eqref{e:CSF} with positive coupling $\k >0$. 

\begin{lemma}
	Let $\k >0$ and $\phi_*M>\sigma\bar{\th}$, then the system \eqref{e:CSF} aligns exponentially fast,
	\begin{equation}\label{e:align}
	\mathcal{A}\leq C_0e^{-\d t},
	\end{equation}
	where $C_0, \d>0$ depend on the initial data and parameters of the system.
\end{lemma}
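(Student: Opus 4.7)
The plan is to apply \RL\ to $\cA^2=\max_{i,j}|\bv_{ij}|^2$, derive a differential inequality of the form
\[
\ddt\cA \leq -(\phi_*M-\s\bar{\th})\cA + C\,e^{-ct},
\]
and conclude via \GL, the hypothesis $\phi_*M>\s\bar{\th}$ producing a strictly negative leading coefficient. The exponentially small forcing on the right comes for free from the already noted contraction $\cB(t)\leq\cB(0)\,e^{-\k\phi_*Mt}$ of the characteristic parameters toward $\bar{\th}$.

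At almost every $t$ I would select an extremal pair $(i,j)$ realizing $\cA=|\bv_{ij}|$ and let $\ell=\bv_{ij}/|\bv_{ij}|$ be the associated unit maximizer. Since $\ell$ maximizes $\ell(\bv_{ab})$ over all pairs, one has $\ell(\bv_i)\geq\ell(\bv_k)\geq\ell(\bv_j)$ for every $k$; otherwise some pair $(i,k)$ or $(k,j)$ would produce a strictly larger gap. Differentiating $\tfrac12|\bv_{ij}|^2$ and using this ordering together with $\phi_{ik},\phi_{jk}\geq\phi_*$, the alignment contribution becomes
\[
\cA\sum_k m_k\bigl[\phi_{ik}(\ell(\bv_k)-\ell(\bv_i)) - \phi_{jk}(\ell(\bv_k)-\ell(\bv_j))\bigr] \leq -\phi_*M\,\cA^2,
\]
since the signs $\ell(\bv_k)-\ell(\bv_i)\leq 0$ and $\ell(\bv_k)-\ell(\bv_j)\geq 0$ permit replacement of $\phi_{ik}$ and $\phi_{jk}$ by the uniform floor $\phi_*$.

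For the forcing term $\s\,\bv_{ij}\cdot[(\th_i-|\bv_i|^p)\bv_i-(\th_j-|\bv_j|^p)\bv_j]$ I would split $\th_i=\bar{\th}+(\th_i-\bar{\th})$. The mean part contributes exactly $\s\bar{\th}\cA^2$ because $\bv_{ij}\cdot(\bv_i-\bv_j)=\cA^2$; the deviation part is controlled by $C\s\cB(t)\cA$ via the a priori bound $|\bv_i|\leq C$; and the cubic friction piece equals
\[
-\s\bigl(|\bv_i|^{p+2}+|\bv_j|^{p+2}-(|\bv_i|^p+|\bv_j|^p)\,\bv_i\cdot\bv_j\bigr) \leq -\s(|\bv_i|-|\bv_j|)(|\bv_i|^{p+1}-|\bv_j|^{p+1}) \leq 0
\]
after Cauchy--Schwarz, and may therefore be discarded. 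Assembling these bounds yields $\tfrac12\ddt\cA^2 \leq -(\phi_*M-\s\bar{\th})\cA^2 + C\,e^{-\k\phi_*Mt}\cA$; dividing by $\cA$ and invoking \GL\ then produces \eqref{e:align} with any rate $\d<\min\{\phi_*M-\s\bar{\th},\,\k\phi_*M\}$. The step requiring the most care is the nonpositivity of the cubic friction bracket above: without this clean factorization the single dissipation budget $\phi_*M-\s\bar{\th}$ would have to absorb a sign-indefinite cubic quantity, and the whole scheme would collapse.
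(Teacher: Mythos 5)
Your argument is correct and follows essentially the same route as the paper: select the extremal pair and functional $\ell = \bv_{ij}/|\bv_{ij}|$, bound the alignment term by $-\phi_*M\cA$ using the ordering $\ell(\bv_j)\le\ell(\bv_k)\le\ell(\bv_i)$, split $\th_i = \bar\th + (\th_i-\bar\th)$ so that the deviation contributes an exponentially decaying forcing $\sim\cB(t)$, observe the remaining self-propulsion piece equals $\s\bar\th\cA$ plus a nonpositive term, and close with Gronwall/Duhamel. The only cosmetic difference is in verifying that the friction piece is nonpositive: the paper applies the mean value theorem to $G(\bw)=\bw(\bar\th-|\bw|^p)$ and drops the negative-definite part of $D_\bw G$, whereas you reach the same sign conclusion by Cauchy--Schwarz followed by the explicit factorization $(|\bv_i|-|\bv_j|)(|\bv_i|^{p+1}-|\bv_j|^{p+1})\ge 0$.
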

\begin{proof}
	We start by a traditional computation which leads to
	\begin{align*}
	\ddt\cA\leq - \phi_*M\cA+\sigma\ell[\bv_i(\theta_i-|\bv_i|^p)-\bv_j(\theta_j-|\bv_j|^p)],
	\end{align*}
	where $\ell, i,$ and $j$ are a maximizing triple for $\cA$. Now adding and subtracting $\bar{\th}$ we get,
	\begin{align*}
	\ell[\bv_i(\bar{\theta}-|\bv_i|^p)&-\bv_j(\bar{\theta}-|\bv_j|^p)]+\ell[\bv_i(\th_i-\bar{\th})-\bv_j(\th_j-\bar{\th})],\\
	&\leq \ell[\bv_i(\bar{\theta}-|\bv_i|^p)-\bv_j(\bar{\theta}-|\bv_j|^p)]+C \cB.
	\end{align*}
	Then considering the functional
	\begin{align*}
	G(\bw)=\bw(\bar{\theta}-|\bw|^p), \ \ \text{with} \ \ D_\bw G(\bw)=\bar{\th}\mathrm{Id}-|\bw|^p\mathrm{Id}-p|\bw|^{p-2} \bw \otimes \bw.
	\end{align*}
	Thus
	\begin{align*}
	\ell[\bv_i(\bar{\theta}-|\bv_i|^p)-\bv_j(\bar{\theta}-|\bv_j|^p) ] =D_\bw G(\bw)(\bv_i-\bv_j),
	\end{align*}
	for some $\bw$ on the segment $[\bv_i,\bv_j]$. Considering $\ell=\frac{\bv_i-\bv_j}{|\bv_i-\bv_j|}$ we can dismiss the entire negative definite part of $D_\bw G$, with the remaining part being $\bar{\theta}\mathrm{Id}$. Therefore,
	\begin{align*}
	\ddt\cA \leq (\sigma\bar{\th}- \phi_*M)\cA + C \cB_0 e^{-\k \phi_*M t}.
	\end{align*}
By \DP\  we conclude the lemma.
\end{proof}
Note that exponential decay of velocity variations \eqref{e:align} always implies strong flocking
\[
\diam\{\bx_i(t)\}_{i=1}^N  \leq \bar{\cD} <\infty,\  \mbox{ and } \ \bx_i(t) - \bx_j(t) \to \bar{\bx}_{ij},
\]
which is a simple consequence of integration of $\dot{\bx}_{ij} = \bv_{ij}$. 

Under the assumptions of the previous lemma we can in fact deduce much more precise information about the long time behavior of velocity field.  Let us denote by $E = E(t)$  or $\bE$ any exponentially decaying quantity.  We have so far
\[
\dot{\bv}_i = \s \bv_i (\bar{\th} - |\bv_i|^p) + \bE. 
\]
Multiplying by $p \bv_i |\bv_i|^{p-2}$ and denoting $y = |\bv_i|^p$ we obtain the following ODE
\begin{equation}\label{e:logexp}
\dot{y} = p\s y(\bar{\th}-y) + E.
\end{equation}
Although the pure forceless logistic equation is easy to solve (all positive solutions converge to $\bar{\th}$ or stay $0$ if initially zero)  the analysis of the forced ODE requires elaboration.  Let us keep in mind that we have  a solution $y$ that is a priori non-negative.  
\begin{lemma} \label{l:logE} Any non-negative solution to \eqref{e:logexp} either converges to $0$ or to $\bar{\th}$. In the latter case, convergence occurs exponentially fast.
\end{lemma}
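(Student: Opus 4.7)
The plan is to treat \eqref{e:logexp} as an exponentially small perturbation of the autonomous logistic ODE $\dot y=p\s y(\bar{\th}-y)$, whose only bounded forward orbits tend to the unstable point $0$ or to the asymptotically stable point $\bar{\th}$. I propose a three-step scheme: a uniform upper bound, a dichotomy forcing either $y\to 0$ or $\liminf y>0$, and a Lyapunov estimate delivering exponential convergence to $\bar{\th}$ in the non-trivial case.

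For the upper bound, since $|E|$ is bounded, $\dot y\leq p\s y(\bar{\th}-y)+\|E\|_\infty$, and a phase-line analysis of the right-hand side (which is strictly negative for $y$ large) traps every forward trajectory in some interval $[0,C]$ with $C$ depending only on initial data and $\|E\|_\infty$.

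For the dichotomy, assume $y\not\to 0$, so that $\limsup_{t\to\infty} y(t)>0$. Pick $\d\in(0,\bar{\th})$ with $\limsup y>\d$ and choose $T$ so large that $|E(t)|\leq \tfrac12 p\s\d(\bar{\th}-\d)$ for $t\geq T$. At any instant $t\geq T$ where $y(t)=\d$,
\[
\dot y = p\s\d(\bar{\th}-\d)+E(t) \geq \tfrac12 p\s\d(\bar{\th}-\d) > 0,
\]
so the level $\{y=\d\}$ acts as a one-way barrier: once $y$ rises above $\d$ beyond time $T$, it cannot cross back below. Since $\limsup y>\d$, such a crossing occurs at some $t_\star\geq T$, and hence $y(t)\geq \d$ for all $t\geq t_\star$.

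With $y\in[\d,C]$ on $[t_\star,\infty)$ in hand, set $V:=(y-\bar{\th})^2$. Direct differentiation along \eqref{e:logexp} gives
\[
\dot V = -2p\s\,y\,(y-\bar{\th})^2 + 2(y-\bar{\th})E \leq -2p\s\d\,V + 2\sqrt{V}\,|E|,
\]
and Young's inequality upgrades this to $\dot V\leq -p\s\d\,V+(p\s\d)^{-1}|E|^2$. Since $|E|^2$ decays exponentially, \DP\ delivers $V(t)\leq C'e^{-\mu t}$ for some $\mu>0$, i.e.\ $y\to\bar{\th}$ exponentially. The only real obstacle is the dichotomy step, as $E$ is sign-indefinite and one must exclude $y$ from sliding back toward $0$ under adverse forcing; the one-way barrier argument circumvents this by exploiting the strict positivity of the unforced drift at any interior threshold $\d\in(0,\bar{\th})$ together with the eventual smallness of $|E|$.
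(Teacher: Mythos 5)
Your proof is correct and follows essentially the same strategy as the paper: both establish a one-way barrier at a level $\d\in(0,\bar\th)$ once $|E|$ is small enough, conclude $y(t)\ge\d$ eventually, and then extract exponential decay of $\bar\th-y$. The only cosmetic difference is the last step, where the paper writes Duhamel's formula for $\bar\th-y$ directly and uses $\int y\ge\d\,(t-t^*)$, while you run a Lyapunov estimate on $(y-\bar\th)^2$ with Young's inequality; both are the same linear-with-forcing bound.
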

\begin{proof}
	Indeed, suppose $y$ does not converge to $0$. Then there exists a $\d>0$ for which there exists a sequence of times $t_1,t_2,... \to \infty$ such that $y(t_i) > \d$. For $t$ large enough we have
	\[
	p\s \d (\bar{\th}-\d) + E(t) >0.
	\]
	Therefore, starting from some time $t^*$, $y(t)$ will never cross $\d$ again:  $y(t) > \d$, $t>t^*$. Solving
	\[
	\ddt (\bar{\th}-y) = - p\s y(\bar{\th}-y) + E,
	\]
	by \DP, we obtain
	\[
	\bar{\th}-y(t) = (\bar{\th}-y(t^*)) \exp\left\{ - p \s \int_{t^*}^t y(s)\ds \right\} + \int_{t^*}^t E(s) \exp\left\{ - p \s \int_{s}^t y(\tau)\dtau \right\} \ds.
	\]
	So, $|\bar{\th}-y(t)|$ is an exponentially decaying quantity. 
\end{proof}
Since $\cA \to 0$, we conclude from \lem{l:logE} that either all $\bv_i \to 0$ or all $|\bv_i| \to \bar{\th}^{1/p}$ exponentially fast.  In the latter case we obtain  $\dot{\bv}_i = \bE$, and hence all $\bv_i$'s  converge to a single vector on $\S^{n-1}$.  We therefore have a full description of the dynamics under absolute communication.
\begin{theorem}\label{t:altR}
	Let  $\k>0$, $\phi_* = \inf \phi>0$, and $ \phi_* M >\s\bar{\th}$.  Then $\cA \to 0$ exponentially fast, and either all $\bv_i \to 0$ or there exists a single vector $\bar{\bv}\in \R^n$, $|\bar{\bv}| = \bar{\th}^{1/p}$, to which all $\bv_i$ converge exponentially fast.
\end{theorem}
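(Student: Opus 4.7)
The first step is immediate: the preceding alignment lemma already delivers $\cA(t) \leq C_0 e^{-\d t}$, which in turn yields the flocking statement $\diam\{\bx_i\} \leq \bar{\cD}$ and $\bx_{ij}(t) \to \bar{\bx}_{ij}$ by integrating $\dot{\bx}_{ij} = \bv_{ij}$. So the heart of the proof is to pin down the asymptotics of $|\bv_i|$ and then upgrade from scalar convergence to convergence as vectors in $\R^n$.

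Next I would rewrite the velocity equation in the form
\[
\dot{\bv}_i = \s \bv_i(\bar{\th} - |\bv_i|^p) + \bE_i,
\]
where $\bE_i$ absorbs the alignment term $\sum_k m_k \phi_{ik}(\bv_k-\bv_i)$ (bounded by $(\sup \phi) M \cA$) and the correction $\s \bv_i(\th_i - \bar{\th})$ (bounded by $C \cB$). Since both $\cA$ and $\cB$ decay exponentially and $|\bv_i|$ is a priori bounded, $\bE_i$ is exponentially small. Multiplying by $p\bv_i|\bv_i|^{p-2}$ converts this into \eqref{e:logexp} for $y_i := |\bv_i|^p$ with an exponentially decaying forcing, so \lem{l:logE} applies to every individual agent: each $|\bv_i|$ converges either to $0$ or to $\bar{\th}^{1/p}$, and in the latter case the convergence is exponential.

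The dichotomy is then promoted from agent-wise to global using alignment. Suppose some agent $i_0$ satisfies $|\bv_{i_0}(t)|\to \bar{\th}^{1/p}>0$. Because $\cA(t) \to 0$, we have $||\bv_j|-|\bv_{i_0}|| \leq \cA \to 0$ for every $j$, so no agent can approach the other limit value $0$. Hence either \emph{all} $|\bv_i| \to 0$ or \emph{all} $|\bv_i|\to \bar{\th}^{1/p}$ exponentially fast.

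In the non-trivial case, \lem{l:logE} gives $\bar{\th} - |\bv_i|^p = E(t)$ exponentially small, so the self-propulsion term $\s\bv_i(\bar{\th}-|\bv_i|^p)$ is itself exponentially small; combined with the decay of $\bE_i$ this shows $\dot{\bv}_i$ is exponentially decaying, and integrating from $t$ to $\infty$ gives convergence $\bv_i(t) \to \bar{\bv}_i$ in $\R^n$ for some limit vector with $|\bar{\bv}_i|=\bar{\th}^{1/p}$. Alignment $\cA \to 0$ forces all the $\bar{\bv}_i$ to coincide with a common $\bar{\bv}\in \R^n$, completing the theorem. The only place where care is needed is this last passage: one must quantitatively combine the exponential rate for $\cA$ with the exponential rate for $\bar{\th}-|\bv_i|^p$ to ensure that $\dot{\bv}_i$ is integrable in time, but since both rates are exponential this is automatic and presents no real obstacle.
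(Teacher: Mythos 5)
Your proof follows exactly the paper's argument: invoke the preceding alignment lemma for exponential decay of $\cA$ (and $\cB$), absorb the alignment and $\th_i - \bar\th$ terms into an exponentially decaying error, pass to the scalar ODE \eqref{e:logexp} for $|\bv_i|^p$, apply \lem{l:logE} to get the dichotomy agent-by-agent, use $\cA\to 0$ to globalize the dichotomy, and in the nontrivial case integrate $\dot{\bv}_i=\bE_i$ to obtain a common limit vector of norm $\bar\th^{1/p}$. The only difference is cosmetic: you are slightly more explicit than the paper about why the limit vectors coincide and why integrability of $\dot{\bv}_i$ holds.
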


It is easy to see that for sectorial solutions under the conditions of \thm{t:altR} convergence to $0$ is eliminated.
	Indeed, suppose that all $\bv_i \to 0$.  Let $v^n_i = \min_j v_j^n$. Then
	\[
	\dot{v}_i^n = \sum_k m_k \phi_{ik} (v_k^n - v_i^n) + \s   v_i^n (\th_i - |\bv_i|^p) \geq \s   v_i^n (\th_i - |\bv_i|^p).
	\]
	Since all $|\bv_i|^p \to 0$, from some point on, we will find $(\th_i - |\bv_i|^p) >\bar{\th}/2$. Then
	\[
	\dot{v}_i^n  \geq  c_0  v_i^n ,
	\]
	which implies exponential growth, a contradiction.

In the next section we establish a much stronger result -- sectorial solutions align for any communication with quantitatively defined heavy tail.

\section{Sectorial solutions. Grassmannian reduction}\label{s:sect}
In this section we study sectorial solutions as defined by \eqref{e:sect}. The goal here is to prove \thm{t:sectorial} and introduce a new method of Grassmannian reduction.

The method actually applies to any Cucker-Smale system \eqref{e:CS} with a longitudinal forcing $\bF_i \times \bv_i =0$, and the nature of the force is not important. The initial step is to write down the system for the direction-vectors, $\tilde{\bv}_i = \bv_i/ |\bv_i|$,  which eliminates the force:
\begin{equation}\label{e:unitsys}
 \ddt \tilde{\bv}_i = \sum_{k=1}^N m_k \frac{|\bv_k|}{|\bv_i|} \phi_{ik} ( \id - \tilde{\bv}_i \otimes \tilde{\bv}_i ) \tilde{\bv}_k.
\end{equation}
We further write down the system for the angles $\cos(\g_{ij}) = \tilde{\bv}_i \cdot \tilde{\bv}_j$:
\begin{equation}\label{e:anglesys}
\begin{split}
\ddt \cos(\g_{ij}) & = \sum_{k=1}^N m_k \frac{|\bv_k|}{|\bv_i|} \phi_{ik} ( \cos(\g_{jk} )- \cos(\g_{ij}) \cos(\g_{ik}))\\ &+\sum_{k=1}^N m_k \frac{|\bv_k|}{|\bv_j|} \phi_{jk} ( \cos(\g_{ik} )- \cos(\g_{ij}) \cos(\g_{jk})).
\end{split}
\end{equation}
Let us note that this system in dimension 3 and higher does not have an explicit dissipative structure. However in 2D there is one:  for a planar arrangement of three angles in the upper half plane where 
$\g_{ij}$ is the largest, and $\g_{ij} < \pi - \d$ we have 
\[
\g_{ik} + \g_{jk} = \g_{ij} < \pi - \d.
\]
Then
\[
\cos(\g_{jk} )- \cos(\g_{ij}) \cos(\g_{ik}) = \cos(\g_{ij} -\g_{ik})- \cos(\g_{ij}) \cos(\g_{ik}) = \sin(\g_{ij}) \sin(\g_{ik}) \geq 0,
\]
and similarly,
\[
\cos(\g_{ik} )- \cos(\g_{ij}) \cos(\g_{jk}) \geq 0.
\]
Consequently, all the terms in both sums of \eqref{e:anglesys} are non-negative.  We now denote by $\cD = \cD(t)$ the diameter of the flock and note that $\phi_{ik} \geq \phi(\cD)$. 
Since, the velocities are also bounded from above and below, we finally obtain
\begin{multline*}
	\ddt \cos(\g_{ij}) \geq c \phi(\cD) \sum_{k=1}^N m_k (\cos(\g_{jk} )- \cos(\g_{ij}) \cos(\g_{ik}) +  \cos(\g_{ik} )- \cos(\g_{ij}) \cos(\g_{jk})) \\
	 = c \phi(\cD) \sum_{k=1}^N m_k ( \cos(\g_{ik})  + \cos(\g_{jk}) )( 1 -  \cos(\g_{ij}) ).
\end{multline*}
Now
\[
\cos(\g_{ik})  + \cos(\g_{jk}) = 2 \cos\left( \frac{\g_{ij}}{2}\right) \cos\left(\frac{\g_{ik} - \g_{jk}}{2} \right) \geq c_0,
\]
due to sectorial limitation on the angles. So,
\begin{equation*}
		\ddt \cos(\g_{ij}) \geq c M \phi(\cD) (1 -  \cos(\g_{ij})),
\end{equation*}
or
\begin{equation*}\label{e:cos}
\ddt (1-\cos(\g_{ij})) \leq - c M \phi(\cD) (1 -  \cos(\g_{ij})).
\end{equation*}

To recover similar inequality in arbitrary dimension, let us fix a 2D plane $\Pi$ containing the $x_n$-axis, and let us consider the projection of \eqref{e:unitsys} onto $\Pi$:
\begin{equation*}
	\ddt {\bv}_i^\Pi =\sum_{k=1}^Nm_k\phi_{ik}(\bv_k^\Pi -\bv_i^\Pi )+\sigma(\theta_i-|\bv_i|^p)\bv_i^\Pi.
\end{equation*}
Noting that the $n$'th coordinates of the projections remain the same as for the original vectors, all norms of ${\bv}_i^\Pi$ remain bounded above and below.  Let us now write the system for unit vectors 
\begin{equation}\label{e:unitsyspi}
\ddt \tilde{\bv}_i^\Pi = \sum_{k=1}^N m_k \frac{|\bv_k^\Pi|}{|\bv_i^\Pi|} \phi_{ik} ( \id - \tilde{\bv}_i^\Pi \otimes \tilde{\bv}_i^\Pi ) \tilde{\bv}_k^\Pi.
\end{equation}
Let us keep in mind that $\phi_{ik} = \phi(\bx_i - \bx_k)$ still depend on the original agents' coordinates.  From \eqref{e:unitsyspi} we deduce the same system  for the angles $\cos(\g^\Pi_{ij}) = \tilde{\bv}_i^\Pi \cdot \tilde{\bv}_j^\Pi$ as in the original variables:
\begin{equation}\label{e:anglesyspi}
\begin{split}
\ddt \cos(\g_{ij}^\Pi) & = \sum_{k=1}^N m_k \frac{|\bv_k^\Pi|}{|\bv_i^\Pi|} \phi_{ik} ( \cos(\g_{jk}^\Pi )- \cos(\g_{ij}^\Pi) \cos(\g_{ik}^\Pi))\\ &+\sum_{k=1}^N m_k \frac{|\bv_k^\Pi|}{|\bv_j^\Pi|} \phi_{jk} ( \cos(\g_{ik}^\Pi )- \cos(\g_{ij}^\Pi) \cos(\g_{jk}^\Pi)).
\end{split}
\end{equation}
Note that 
\[
\g^{2D} = \max_{\Pi \in \cG(1,n-1), i,j} \g^\Pi_{ij} \geq \pi - \d,
\]
where $\cG(1,n-1)$ is the space of 2D planes containing $x_n$-axis, which can be identified as the compact 1-Grassmannian manifold of $\R^{n-1}$. 

Taking now the minimum over $\Pi, i, j$, writing \eqref{e:anglesyspi} for a minimizing triple, and invoking  \RL\ we run the 2D computation above for the reduced system \eqref{e:anglesyspi}:
\begin{equation}\label{e:cos2D}
\ddt (1-\cos(\g^{2D})) \leq - c M \phi(\cD) (1 -  \cos(\g^{2D})).
\end{equation}

Now, let us observe an elementary inequality 
\begin{equation}\label{e:gg2D}
	\g = \max_{i,j} \g_{ij} \leq \g^{2D}.
\end{equation}
Indeed, let $\g = \g_{ij}$. Consider the 2-dimensional  plane $\Pi$ spanned by $x_n$-axis and $\bv_i - \bv_j$. Note that $\bv_i - \bv_j = \bv_i^\Pi -\bv_j^\Pi$. So,  considering the two  isosceles triangles spanned on $\bv_i,\bv_j$ and $\bv_i^\Pi,\bv_j^\Pi$ and applying the Cosine Theorem, we have
\[
2 (1 - \cos(\g)) = |\bv_i - \bv_j |^2 = 2 |\bv_i^\Pi|^2(1 - \cos(\g_{ij}^\Pi)) \leq 2 (1 - \cos(\g_{ij}^\Pi)) \leq  2 (1 - \cos(\g^{2D})).
\]
This proves \eqref{e:gg2D}. The opposite inequality holds up to a constant factor also
\[
c \g^{2D} \leq \g,
\]
where $c$ depends on the opening of the sector $\Sigma$. 

Let us note that \eqref{e:gg2D} still doesn't prove exponential shrinking of the solution sector, since $\cD$ depends on time and can potentially spread.  So, we come back to the original system, and derive one more equation for 
\[
\cR = \max_{i',i''} \frac{|\bv_{i''}|^2}{|\bv_{i'}|^2} = \frac{|\bv_{i(t)}|^2}{|\bv_{j(t)}|^2}.
\]
Note that $\cR$ is a priori bounded from above and below.  We write
\begin{multline*}
	\ddt \cR = \frac{1}{|\bv_{j}|^2}\sum_{k=1}^N m_k\phi_{ik} 
	  ( \bv_k \cdot \bv_i - |\bv_i|^2) +\frac{|\bv_i|^2}{|\bv_{j}|^4}\sum_{k=1}^N m_k\phi_{jk} 
	  (|\bv_{j}|^2 - \bv_k \cdot \bv_j) +\cR( \th_i - \th_j + |\bv_j|^p - |\bv_i|^p).
\end{multline*}
The first sum is negative, so we simply dismiss it. In the second sum we use
\[
|\bv_{j}|^2 - \bv_k \cdot \bv_j \leq |\bv_{j}|^2 - |\bv_{k}||\bv_{j}| \cos(\g) \leq |\bv_{j}|^2(1 - \cos(\g)) \lesssim (1 - \cos(\g)) .
\]
For the friction term we observe
\[
\cR( \th_i - \th_j + |\bv_j|^p - |\bv_i|^p) \lesssim \cB + (1- \cR^{p/2}) \lesssim \cB +(1-\cR).
\]
Thus,
\begin{equation}\label{e:R}
		\ddt (\cR - 1) \leq c_1(1 - \cos(\g))  + c_2\cB - c_3(\cR-1).
\end{equation}
Finally, we complement the system with the $\cB$-equation
\begin{equation}\label{e:B}
	\ddt\cB\leq - \k \phi(\cD) M\cB,
\end{equation}
We are now ready to prove our main result for sectorial solutions.

\begin{proof}[Proof of \thm{t:sectorial}]
We will use a bootstrap argument. Suppose $\b <1$ first. Since all velocities remain bounded, we have
\[
\cD(t) \lesssim t.
\]
Using this in \eqref{e:cos2D} and \eqref{e:gg2D} we obtain
\[
(1- \cos(\g)) + \cB \lesssim e^{- c \lan t\ran^{1-\b}}.
\]
Plugging this into the $\cR$-equation \eqref{e:R}, and using \DP, we have
\[
(\cR - 1) \lesssim e^{- c \lan t\ran^{1-\b}}.
\]
Noting the bound
\[
\ddt \cD \leq \cA \lesssim \sqrt{(\cR - 1) + (1 - \cos(\g))} \lesssim e^{- c \lan t\ran^{1-\b}},
\]
we now see that the diameter of the flock remains bounded $\cD \leq \bar{\cD}$. Going back to the system \eqref{e:cos2D}-\eqref{e:R}-\eqref{e:B} we conclude exponential decay for $(\cR - 1) + (1 - \cos(\g))+ \cB$ and hence for $\cA$. 

Next, with the obtained information we can write the equations for extreme norms
\[
\ddt |\bv_\pm|^p = p \s |\bv_\pm|^p(\bar{\th} - |\bv_\pm|^p) + E(t).
\]
Since $c\leq |\bv_\pm|^p \leq C$ we conclude that $\bar{\th} - |\bv_\pm|^p$ tend to zero exponentially fast. This immediately implies that 
\[
\ddt \bv_i = \bE_i, \quad \forall i,
\]
and hence each vector has a limit, which is common for all $\bv_i$, as $t\to \infty$.  This finishes the proof for the $\b<1$ case.

Let us turn to the case $\b = 1$. Here we make one more preliminary step: from $\cD \lesssim t$ we deduce that 
\[
(1- \cos(\g)) + \cB \lesssim \frac{1}{\lan t \ran^{\eta}}, \quad \eta>0.
\]
Hence, due to the asymptotic formula $e^{-c t} \ast \frac{1}{\lan t \ran^{\eta}} \sim \frac{1}{\lan t \ran^{\eta}}$,
\[
\cR - 1 \lesssim \frac{1}{\lan t \ran^{\eta}}.
\]
This in turn implies
\[
\cD \lesssim  \lan t \ran^{1-\frac{\eta}{2}}.
\]
Plugging into the kernel again is essentially the same as assuming that $\b < 1$, and the previous steps get repeated to finish the theorem.
\end{proof}

\begin{example}\label{ex:fat}
	A modification of the classical example shows that the fat tail condition $\b \leq 1$ is necessary. Indeed, let us consider a two-agent system with $\bv' = \lan v_1,v_2\ran$ and $\bv'' = \lan -v_1, v_2\ran$. We assume that the kernel is given by the exact power law $\phi(r) = \frac{1}{r^\b}$ for $r>r_0$. Our initial condition for coordinates of the agents $\bx' = \lan x_1,x_2\ran$, $\bx'' = \lan -x_1,x_2\ran$ will be such that $x_1(0) > 2r_0$. Then for the time period when $x_1(t) >r_0$ we have the system
	\[
	\left\{\begin{split}
	\dot{v}_1 & = - \frac{v_1}{x_1^\b} + \s v_1(1 - |\bv'|^p)\\
	\dot{v}_2 & = \s v_2(1 - |\bv'|^p).
	\end{split}\right.
	\]
	Here we assumed that $\th_1 = \th_2 = 1$. Now, if $|\bv'(0)| <1$, it will remain so by the maximum principle. Then we obtain the system
	\[
	\dot{v}_1 \geq - \frac{v_1}{x_1^\b}, \quad \dot{x}_1 = v_1.
	\]
	The system has a Lyapunov function
	\[
	L = v_1 + \frac{x_1^{1-\b}}{1-\b},
	\]
	which decays on trajectories. Thus, since $\b >1$,
	\[
	v_1(t) \geq L(t) \geq v_1(0) + \frac{x_1^{1-\b}(0)}{1-\b}.
	\]
	So, if $r_0$ is sufficiently small, we can set $1> v_1(0) > \frac{x_1^{1-\b}(0)}{\b-1}$, and $v_2(0)$ is small too in order to satisfy $|\bv'(0)| <1$. The above computation then shows that $v_1(t) > c_0 >0$. This in part implies that $x_1(t)$ is increasing and hence the condition $x_1 >r_0$ will hold indefinitely. Hence, $v_1(t) > c_0$ holds indefinitely too. This establishes misalignment.  
\end{example}

\section{Friction with frozen parameters: opinion dynamics}\label{s:opinion}

We now focus on the case when $\kappa = 0$, i.e. the parameters remain constant for each agent. It is clear that flocking itself is not possible in this case. As the agents settle to certain speeds, even if unidirectional, the speeds are not expected to be equal. So, the spread of the flock will continue to grow indefinitely. To disregard the spread of the flock, and focus solely on the interesting part of dynamics, we will assume in this section that the communication kernel is uniform $\phi \equiv 1$, so  the system becomes a first order system of ODEs:
\begin{equation}\label{e:opinionV}
	\dot{\bv}_i =\sum_{j=1}^N m_j (\bv_j-\bv_i)+\sigma(\theta_i-|\bv_i|^p)\bv_i.
	\end{equation}
In particular, for the case of one-dimensional sectorial solutions, i.e. $\bv_i = y_i >0$, the system \eqref{e:opinionV} becomes
\begin{equation}\label{e:opinionY}
 \ddt y_i = \sum_{k=1}^N m_k (y_k - y_i ) + \s (\th_i - y_i^p) y_i.
\end{equation}
In fact, it is easy to see that \eqref{e:opinionY} constitutes the core dynamics of any sectorial solution to the general system \eqref{e:opinionV}. Indeed, the method of Grassmannian reduction implies exponential shrinking of the angles:
\[
1 - \cos \g \leq c_0 e^{-M t}.
\]
Since by the sectorial maximum principle the solution sector at time $t$ is a nested family: $\Sigma(t) \subset \Sigma(s)$, $t>s$, this necessarily implies that there exists a direction 
$\tilde{\bv} \in \S^{n-1}$,
 such that 
\[
| \tilde{\bv}_i(t) -\tilde{\bv} | \leq C e^{-\d t}.
\]
Thus the original $nN$-dimensional system of  \eqref{e:opinionV} reduces to an $N$-dimensional system on the speeds $y_i = |\bv_i|$. Indeed, multiplying \eqref{e:opinionV} with $\bv_i$ and dividing by $y_i$ we obtain a perturbation of \eqref{e:opinionY}:
 \begin{equation}\label{e:fullY}
 \ddt y_i = \sum_{k=1}^N m_k (y_k - y_i ) + \s (\th_i - y_i^p) y_i + E_i(t),
\end{equation}
where $E_i$ is a generic exponentially decaying function. It is therefore expected that dynamics of solutions to \eqref{e:opinionV} are determined by the dynamics of the one dimensional system \eqref{e:opinionY}.

The study of system \eqref{e:opinionY} is interesting in its own right. It can be viewed as a model of opinion games, where $y_i$'s represent opinions and $\th_i$'s represent convictions. Convictions do not change in time, while opinions are pushed towards consensus via the alignment forces.  Long time dynamics, therefore, are expected to lead to an ``agreement'', i.e. a steady state of the system 
\begin{equation}\label{e:yi}
		 \sum_{k=1}^N m_k (y_k - y_i) + \s (\th_i - y_i^{p})y_i = 0.
	\end{equation}
A solution to \eqref{e:yi} can be interpreted as the Nash equilibrium of the opinion game, i.e. a settlement for which no agent profits from shifting into another opinion (mixed strategy in terms of \cite{Nash}). This can be expressed in terms of payoff functions \eqref{e:payoff} as a solution to 
\[
p_i(\by^*) = \max_{r^p\in [\min \th_k,\max\th_k]} p_i(y_1^*,\ldots,y_{i-1}^*, r, y_{i+1}^*,\ldots,y_N^*), \qquad \forall i =1,\ldots,N.
\]
The full justification of this interpretation will be evident from the a priori bounds on a possible solution given by \eqref{e:minmax}, and the fact that $\frac{\p^2}{\p y_i^2} p_i(\by^*) <0$ for any such solution as a consequence of \eqref{e:ylower}.

Existence of Nash equilibrium is trivial in the case of consensus of convictions  $\th_1=\ldots = \th_N$. Then $y_i = \th_i$. Generally, however, it is not evident why an equilibrium  exists as our payoff functions do not satisfy the classical convexity assumptions. Instead we resort to the use of the Brouwer topological degree, see \cite{Cronin} for background. Our main result is the following theorem.

\begin{theorem}\label{t:unique} For any positive set of parameters $(\bth,\bmm, \s)$ there exists a unique positive solution
$\by^*$ to \eqref{e:yi}, which is a locally exponentially stable equilibrium of system \eqref{e:opinionY}. The map $\by^* = \by^*(\bth,\bmm, \s): \R^N_+ \times \R^N_+ \times \R_+ \to \R^N_+$ is infinitely smooth. Moreover, any sectorial solution $\bv(t)  \in \Sigma$ to \eqref{e:opinionV} converges to the one dimensional set of vectors
\[
\bv_i(t) \to y_i^*  \tilde{\bv},
\]
for some $\tilde{\bv} \in \S^{n-1}$.  

In particular, any solution $\by\in \R^N_+$ to system  \eqref{e:opinionY} converges to $\by^*$.
\end{theorem}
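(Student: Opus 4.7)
I would begin with a central observation: the system \eqref{e:opinionY} is a weighted gradient flow for the functional
\[
V(\by) = \frac{M}{2}\sum_i m_i (y_i - \bar y)^2 + \s\sum_i m_i \left(\frac{y_i^{p+2}}{p+2}-\frac{\th_i}{2}y_i^2\right).
\]
A direct computation gives $\p_{y_i}V = m_i\,[M(y_i-\bar y) + \s y_i(y_i^p-\th_i)] = -m_i \dot y_i$, so $\ddt V(\by(t)) = -\sum_i m_i \dot y_i^{\,2}\le 0$, and positive steady states of \eqref{e:opinionY} are precisely the critical points of $V$ in $\R^N_+$. Note also that $-V$ coincides with the $m$-weighted sum of the payoffs \eqref{e:payoff} up to an additive constant, so $V$ is the natural social welfare functional for the game.

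\textbf{Existence, a priori bounds, and smoothness.} Evaluating \eqref{e:yi} at the maximal (respectively, minimal) coordinate of any positive solution $\by^*$ gives $\th_{\min}^{1/p}\le y_i^*\le\th_{\max}^{1/p}$, which is \eqref{e:minmax}; the same two-sided bound, now applied to the ODE \eqref{e:opinionY} itself, traps every positive trajectory in a forward-invariant compact set $K\Subset\R^N_+$. Since the $y_i^{p+2}$ term dominates, $V$ is coercive and lower semicontinuous on $\overline{\R^N_+}$ and attains its minimum there; the a priori bound places the minimizer in $K$, producing a critical point and hence existence. Alternatively, existence follows from the Brouwer degree by homotoping along $(\bth,\bmm,\s)$ from the trivial consensus case $\th_1=\cdots=\th_N$ (where $y_i\equiv\th^{1/p}$ is the unique solution) to the general case, the a priori box keeping the vector field of \eqref{e:yi} nonvanishing on the boundary throughout. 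Once uniqueness and invertibility of the linearization are established, the implicit function theorem applied to \eqref{e:yi} delivers smoothness of $\by^*=\by^*(\bth,\bmm,\s)$.

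\textbf{Uniqueness and local stability.} Differentiating twice gives the Hessian $H_{ii}=m_i(M-m_i)+m_i\s((p+1)y_i^p-\th_i)$ and $H_{ij}=-m_i m_j$ for $i\ne j$, which decomposes as $H=L+\Delta$ where $L$ is the positive semidefinite weighted graph Laplacian with edge weights $m_im_j$ (kernel spanned by $\mathbf{1}$) and $\Delta$ is diagonal. Eliminating $\th_i$ at a critical point via \eqref{e:yi} and invoking the refined lower bound \eqref{e:ylower} shows each $\Delta_{ii}>0$, so $H(\by^*)$ is strictly positive definite. Consequently the linearization of $\dot\by = -\diag(\bmm)^{-1}\n V$ at $\by^*$ is Hurwitz, giving local exponential stability. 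Uniqueness then follows from an index count: every critical point is a non-degenerate local minimum and contributes $+1$ to the Brouwer degree, while coercivity forces the total degree of $-\n V$ on any sufficiently large box to equal $+1$, ruling out a second critical point; equivalently, a mountain pass between two minima would produce a saddle with nontrivial Morse index, contradicting positive definiteness of $H$ at every critical point.

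\textbf{Global convergence.} This is the main obstacle, since, as the authors emphasize, orbits may undergo slow transients before settling and classical Lyapunov/LaSalle theory is not sufficient. The strategy has two ingredients. First, because $V$ is real analytic on the compact set $K$ containing the trajectory, the finite-dimensional Lojasiewicz gradient inequality applies to the gradient flow $m_i\dot y_i=-\p_{y_i}V$ and forces every bounded orbit to have finite length, hence to converge to a single critical point; by the uniqueness above this limit must be $\by^*$. Second, for a general sectorial solution of \eqref{e:opinionV} in $\R^n$, the Grassmannian reduction of \sect{s:sect} gives exponentially fast angular alignment $|\tilde\bv_i-\tilde\bv|\lesssim e^{-\d t}$, so the speeds $y_i=|\bv_i|$ satisfy the perturbed system \eqref{e:fullY} with an exponentially decaying source $E_i$. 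A standard perturbation argument near the hyperbolic equilibrium $\by^*$, combined with asymptotic autonomy of the scalar flow far from $\by^*$, transports convergence from \eqref{e:opinionY} to \eqref{e:opinionV}, yielding $\bv_i(t)\to y_i^*\tilde\bv$ for a common $\tilde\bv\in\S^{n-1}$.
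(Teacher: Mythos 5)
Your identification of the gradient structure in the original $\by$-variables with the weighted metric $\diag(\bmm)$ is correct and slightly cleaner than the paper's substitution $z_i=\sqrt{m_i}\,y_i$; the two are equivalent up to a fixed positive-definite change of metric, and your Hessian entries $H_{ii}=m_i(M-m_i)+m_i\s((p+1)y_i^p-\th_i)$, $H_{ij}=-m_im_j$ match the paper's $d_i$'s via $H=\diag(\bmm)\,D_\by\fF$. Your existence argument by coercivity and minimization of $V$ over $\overline{\R^N_+}$ (plus the observation that the minimizer is interior) is a genuine alternative to the paper's pure degree argument and is a nice addition. The degree/homotopy argument you sketch is essentially the paper's.

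There is, however, a genuine gap in the uniqueness and stability step. You decompose $H=L+\Delta$ with $L$ the weighted graph Laplacian (kernel $\mathbf 1$) and $\Delta_{ii}=m_i\s\bigl((p+1)y_i^p-\th_i\bigr)$, and assert that \eqref{e:ylower} forces each $\Delta_{ii}>0$. This is false. The bound \eqref{e:ylower} only yields $d_i=M+\s\bigl((p+1)y_i^p-\th_i\bigr)>0$, i.e. $\Delta_{ii}/m_i>-M$, not $>0$. Concretely, the paper's asymptotics $\by^*(\s)\to\bar\th^{1/p}\mathbf 1$ as $\s\to0$ show that $(p+1)(y_N^*)^p\to(p+1)\bar\th$, which is strictly below $\th_N$ whenever the largest conviction dominates the weighted average (take, e.g., $N=2$, $p=1$, $m_1=9$, $m_2=1$, $\th_1=1$, $\th_2=100$, so $\bar\th=10.9$ and $(p+1)\bar\th=21.8<100$); then $\Delta_{NN}<0$ for small $\s$. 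Thus $H=L+\Delta$ is not a sum of a PSD and a positive diagonal matrix, and positive definiteness cannot be read off the decomposition. The paper instead establishes positivity of the leading minors through the identity $\frac{1}{d_i}=\frac{y_i}{M\bar y}-p\s\frac{y_i^{p+1}}{M\bar y\,d_i}$, which uses the equilibrium equation itself to prove $\sum_k m_k/d_k<1$; this is the essential ingredient your argument is missing. The downstream ``mountain pass'' and degree-count reasoning in your uniqueness paragraph depend on the (correct but unproved by you) positive definiteness, so they inherit this gap.

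On global convergence your strategy differs from the paper's. The paper adapts Simon's arc-length estimate directly to the perturbed flow $\dot\bz=-\nabla\Phi(\bz)+\bE(t)$ via the modified functional $H(t)=\Phi(\bz(t))+\tfrac34\int_t^\infty\|\bE\|^2$, giving a self-contained, quantitative proof that the orbit has finite length. You instead propose to treat the unperturbed system by Lojasiewicz (which, once uniqueness and hyperbolicity of $\by^*$ are known, could even be replaced by LaSalle) and then appeal to ``asymptotic autonomy'' plus local hyperbolicity to transfer the conclusion to the perturbed system. The idea is defensible -- the $\omega$-limit set of a precompact orbit of an asymptotically autonomous system is internally chain transitive for the limit flow, and for a gradient flow with a unique isolated equilibrium in the invariant region this forces convergence -- but it invokes nontrivial machinery (Conley/Thieme) in place of a short explicit estimate, and you do not actually carry it out. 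If you keep this route, make the chain-transitivity step explicit; otherwise the paper's Lemma~\ref{l:al} is the cleaner path.
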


The proof of this theorem involves several stages split in separate subsections below. We first discuss uniqueness and stability.

\subsection{Uniqueness and stability of Nash equilibria}
	
A rough a priori estimate on the location of any equilibrium can be obtained by simply evaluating at extreme point. Denoting by $y_+$ the maximal $y_i$, and $\th_+$ the corresponding $\th_i$ with the same index we have
\[
y_+^p \leq \th_+.
\]
Similarly,
\[
y_-^p \geq \th_-.
\]
This implies that for all $i$, the solutions settles between the extreme values of $\th$:
\begin{equation}\label{e:minmax}
	\min \th_k \leq  y_i^p \leq  \max \th_k.
\end{equation}
So, if all $\th = \th_i = \th_j$, then we obtain only one solution $y_i = \th^{1/p}$. 

In what follows we will obtain a more subtle estimate on critical points. One immediate estimate can be achieved by dropping the term $\sum_k m_k y_k$ entirely from \eqref{e:yi}:
\[
(\s \th_i -M) y_i \leq \s y_i^{p+1}.
\]
So, 
\begin{equation}\label{e:yibelow}
	y_i^p \geq \th_i - \frac{M}{\s}.
\end{equation}
Since the system is permutation  invariant, we can assume without loss of generality that $\th$'s are monotonically increasing
\[
0< \th_1 \leq \ldots \leq \th_N.  
\]

\begin{lemma}\label{l:structure}
One has $\th_1 \leq y^p_1 \leq \ldots \leq y^p_N \leq \th_N$, with the following estimate holding for all $i$:
\begin{equation}\label{e:ylower}
	y_i^p \geq \th_i + \frac{m_{\geq i} - M}{\s}, \qquad m_{\geq i} = m_i+\ldots+m_N.
\end{equation}
Moreover, 
$\th_i = \th_j$, for a pair $i\neq j$, if and only if $y_i = y_j$.
\end{lemma}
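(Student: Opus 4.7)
The plan is to recast the equilibrium relation \eqref{e:yi} as
\[
f_i(y_i) = M\bar{y}, \qquad f_i(y) := \s y^{p+1} + (M - \s\th_i)y, \qquad \bar{y} = \frac{1}{M}\sum_k m_k y_k,
\]
so that the left-hand side is the \emph{same} constant $C := M\bar{y}$ for every $i$. Moreover $f_j(y) - f_i(y) = \s(\th_i - \th_j)y$, so raising $\th$ shifts the graph of $f$ downward pointwise. This single reformulation will drive all four assertions.

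For the ordering, suppose $\th_i < \th_j$; I claim $y_i \leq y_j$. If instead $y_i > y_j$, then $f_i(y_j) = C + \s(\th_j - \th_i)y_j > C = f_i(y_i)$, which contradicts $f_i$ being non-decreasing on $[y_j, y_i]$. To secure monotonicity, examine $f_i'(y) = (p+1)\s y^p - (\s\th_i - M)$: if $\s\th_i \leq M$ the function is increasing on all of $\R^+$; otherwise it is increasing exactly on $y^p \geq (\s\th_i - M)/((p+1)\s)$, and the crude bound \eqref{e:yibelow} applied to $y_j$ together with $\th_j \geq \th_i$ gives
\[
y_j^p \geq \th_j - M/\s \geq \th_i - M/\s \geq \frac{\s\th_i - M}{(p+1)\s},
\]
placing both $y_j$ and $y_i$ in the monotone branch. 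Combined with the endpoint estimates $y_N^p \leq \th_N$ and $y_1^p \geq \th_1$, obtained by writing \eqref{e:yi} at the maximal and minimal index and using the sign of $\sum_k m_k(y_k - y_i)$, this yields the full chain.

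With the ordering in hand, the refined bound \eqref{e:ylower} follows from the split
\[
\s(\th_i - y_i^p)y_i = \sum_{k<i} m_k(y_i - y_k) - \sum_{k>i} m_k(y_k - y_i),
\]
in which both sums are non-negative; discarding the second and bounding the first by $(M - m_{\geq i})\, y_i$, division by $y_i > 0$ yields $\s(\th_i - y_i^p) \leq M - m_{\geq i}$, which is exactly \eqref{e:ylower}. Finally, for the equivalence $\th_i = \th_j \Leftrightarrow y_i = y_j$: subtracting the two equilibrium equations when $y_i = y_j =: y$ collapses the sums and leaves $\s(\th_i - \th_j)y = 0$, so $\th_i = \th_j$; conversely, $\th_i = \th_j$ makes $f_i \equiv f_j$, and since $y_i, y_j$ both lie in the monotone branch of $f_i$, the identity $f_i(y_i) = f_i(y_j)$ forces $y_i = y_j$.

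The main obstacle is the first step: the polynomial $f_i$ is \emph{not} globally monotone on $\R^+$ whenever $\s\th_i > M$, so one cannot simply invert the equation. The rescue is the a priori estimate \eqref{e:yibelow}, which locates every $y_k$ in the increasing branch of the relevant $f_i$, turning what looks like a nonlinear inversion problem into a one-line monotonicity argument. Everything else then flows from permutation invariance and this ordering.
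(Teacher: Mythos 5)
Your proof is correct and follows essentially the same route as the paper's: the crucial a priori estimate \eqref{e:yibelow} places every $y_k$ past the critical point of the map $y \mapsto \s y^{p+1} + (M - \s\th_i)y$, and the ordering, the refined bound \eqref{e:ylower}, and the equivalence then all follow by the same algebra on the subtracted equilibrium equations. Your level-set formulation $f_i(y_i) = M\bar{y}$ is a tidy repackaging of the paper's mean-value-theorem step (both amount to the strict monotonicity of $f_i$ on the interval between $y_i$ and $y_j$), so the two arguments are logically interchangeable rather than genuinely distinct.
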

\begin{proof}
Let $i>j$. Subtracting the two equations we obtain
\[
\s(\th_i - \th_j) y_i + (\s \th_j - M)(y_i - y_j) = \s(y_i^{p+1} - y_j^{p+1}).
\]
We then have
	\begin{equation}\label{e:yijthij}
		\s(\th_i - \th_j) y_i + (\s \th_j -M)(y_i - y_j) = \s(p+1) (y_i - y_j) c^p,
	\end{equation}
for some $c$ between $y_j$ and $y_i$. If $y_i < y_j$, we divide by $y_i - y_j$ and get in view of \eqref{e:yibelow}
 	\[
(\s \th_j - M)\geq  \s(p+1)  c^p.
 	\]
This necessarily implies that  $\s \th_j - M > 0$. Using \eqref{e:yibelow} we obtain
\[
(\s \th_j - M)\geq (p+1)(\s \th_j -M),
\]
which is a contradiction.  Thus, $y_i \geq y_j$. 

Now, if $y_i = y_j$, then turning back to \eqref{e:yijthij} we conclude $\th_i = \th_j$. If $\th_i = \th_j$, but $y_i \neq y_j$, we arrive at equality
	\[
(\s \th_j - M) =   \s(p+1)  c^p,
\]
and obtain a contradiction as before. 

To obtain the estimate \eqref{e:ylower} we drop the lower terms $y_1,...,y_{i-1}$ from the average, and replace all others with $y_i$. We obtain
\[
m_{\geq i} y_i + (\s \th_i - M) y_i \leq \s y_i^{p+1},
\]
which implies \eqref{e:ylower}.
\end{proof}

We are now ready to prove existence and uniqueness of equilibria.

\begin{proposition}\label{p:unique}
For any positive set of parameters $(\bth,\bmm, \s)$ there exists a unique positive solution
$\by^*$ to \eqref{e:yi}, which is a locally exponentially stable equilibrium of system \eqref{e:opinionY}. The map $\by^* = \by^*(\bth,\bmm, \s): \R^N_+ \times \R^N_+ \times \R_+ \to \R^N_+$ is infinitely smooth.
\end{proposition}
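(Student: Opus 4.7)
The plan is to reduce the statement to a Brouwer degree argument combined with the implicit function theorem, both powered by a single structural computation of the Jacobian $DF$ at an arbitrary equilibrium. Define $F:\R^N_+\to \R^N$ by $F_i(\by) = \sum_k m_k(y_k-y_i) + \s(\th_i-y_i^p)y_i$, so that zeros of $F$ in $\R^N_+$ are exactly the positive solutions of \eqref{e:yi}. By \lem{l:structure}, any such zero satisfies $(\min_k \th_k)^{1/p}\le y_i \le (\max_k \th_k)^{1/p}$, confining zeros to a compact set $K \subset \R^N_+$ uniformly separated from $\p\R^N_+$; I fix an open bounded neighborhood $\Omega \supset K$ on which the Brouwer degree of $F$ is well defined.

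The crucial step is to compute $DF(\by^*)$ at an arbitrary positive zero. Differentiating $F_i$ and using \eqref{e:yi} to rewrite $\s(\th_i - (y_i^*)^p) = M(1-\bar{y}/y_i^*)$, with $\bar{y} = \tfrac{1}{M}\sum_k m_k y_k^*$, one arrives at
\begin{equation*}
DF(\by^*) = \one\bmm^T - D, \qquad D = \diag(d_i), \qquad d_i = \frac{M\bar y}{y_i^*} + \s p (y_i^*)^p > 0.
\end{equation*}
The matrix determinant lemma then delivers $\det DF(\by^*) = (-1)^N \prod_k d_k \bigl(1 - \sum_k m_k/d_k\bigr)$, while the structural inequality
\begin{equation*}
\sum_k \frac{m_k}{d_k} = \sum_k \frac{m_k y_k^*}{M\bar y + \s p (y_k^*)^{p+1}} < \sum_k \frac{m_k y_k^*}{M\bar y} = 1
\end{equation*}
pins $\sign \det DF(\by^*) = (-1)^N$ at every positive zero, so each equilibrium is non-degenerate with identical local degree.

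I then assemble the three conclusions as follows. For existence and uniqueness I homotope the characteristic vector via $\bth^\tau = (1-\tau)\bar\th\,\one + \tau \bth$; \lem{l:structure} applied to the deformed parameters confines the zeros of $F_\tau$ to a fixed compact subset of $\Omega$ uniformly in $\tau \in [0,1]$, so the degree on $\Omega$ is $\tau$-invariant. At $\tau=0$ all characteristics coincide, the unique positive zero is $y_i\equiv \bar\th^{1/p}$ (again by \lem{l:structure}), and the formula above gives its local degree as $(-1)^N$; hence $\deg(F,\Omega,0) = (-1)^N$ at $\tau=1$, which forces at least one equilibrium and, since every equilibrium contributes exactly $(-1)^N$, forces exactly one. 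For local exponential stability I apply the matrix determinant lemma once more to $\det(\lambda I - DF(\by^*)) = \prod_k(\lambda + d_k)\bigl(1-\sum_k m_k/(\lambda+d_k)\bigr)$, reducing the spectrum to the secular equation $h(\lambda) := \sum_k m_k/(\lambda + d_k) = 1$; since $h$ is strictly decreasing on each of the $N$ intervals between its poles $-d_{(N)} < \cdots < -d_{(1)}$ and $h(0) < 1$ by the structural inequality, an interval-by-interval count yields exactly $N$ real negative roots, the largest lying in $(-d_{(1)}, 0)$. Smoothness of $\by^*(\bth,\bmm,\s)$ then follows from the implicit function theorem at every parameter point, glued globally by uniqueness.

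The main obstacle, as I see it, is the structural inequality $\sum_k m_k/d_k < 1$: this single bound fixes the sign of $\det DF$ throughout the homotopy (delivering both existence and uniqueness) and simultaneously confines every eigenvalue of $DF(\by^*)$ strictly to the left half plane (delivering stability). Once it is in place, the remainder is either the a priori bound of \lem{l:structure} or a routine application of Brouwer degree theory, the matrix determinant lemma, and the implicit function theorem.
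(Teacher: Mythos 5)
Your proof is correct and follows the same overall architecture as the paper's (Brouwer degree for existence/uniqueness plus the implicit function theorem for smoothness), but several of your intermediate steps are cleaner and worth contrasting. The paper writes the Jacobian as $\diag\{d_i\}-\one\bmm^T$ with $d_i = M + \s(p+1)y_i^p - \s\th_i$ and then must invoke the auxiliary lower bound \eqref{e:ylower} to prove $d_i>0$; you instead use the equilibrium relation $\s(\th_i-(y_i^*)^p)=M(1-\bar y/y_i^*)$ to rewrite $d_i = M\bar y/y_i^* + \s p (y_i^*)^p$, which makes positivity of $d_i$ automatic and collapses the key inequality $\sum_k m_k/d_k<1$ to one line, $\sum m_k y_k^*/(M\bar y + \s p (y_k^*)^{p+1}) < \sum m_k y_k^*/(M\bar y)=1$. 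For stability, the paper checks that all leading principal minors of $D\fF$ are positive (implicitly relying on similarity of $\diag\{d_i\}-\one\bmm^T$ to the symmetric $\diag\{d_i\}-\sqrt\bmm\sqrt\bmm^T$ so that Sylvester's criterion applies); your secular-equation argument $\sum m_k/(\lambda+d_k)=1$, locating all $N$ real roots between the poles $-d_{(j)}$ and in $(-d_{(1)},0)$, is a more self-contained way to pin the spectrum to the open left half-plane. Finally, for the degree argument the paper restricts $\fF$ to an explicit wedge $\cW$ and verifies $0\notin\fF(\p\cW)$ boundary-by-boundary, whereas you rely on the a priori box $\min_k\th_k\le y_i^p\le\max_k\th_k$ (this is actually inequality \eqref{e:minmax}, stated just before \lem{l:structure}) being preserved along the segment homotopy $\bth^\t=(1-\t)\bar\th\one+\t\bth$ since $\min_k\th_k^\t\ge\min_k\th_k$ and $\max_k\th_k^\t\le\max_k\th_k$; both are valid, yours is a bit shorter. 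The one cosmetic discrepancy is the sign of the degree ($(-1)^N$ vs.\ $1$), which merely reflects your choice $F=-\fF$.
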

\begin{proof}
For a fixed set of positive parameters $(\bth,\bmm, \s)\in \R^N_+ \times \R^N_+ \times \R_+$ let us consider the mapping $\fF = \fF_{\bth,\bmm, \s}: \R^N_+ \to \R^N$ defined by
\[
\fF(\by) =  \left\{ M y_i - \sum_{k=1}^N m_ky_k - \s (\th_i - y_i^p) y_i\right\}_{i=1}^N.
\]
We claim that any solution to \eqref{e:yi} is not critical for this map and it has positive Jacobian. 

Indeed, let us compute
\[
D_\by \fF(\by) = \diag\{ d_i\}_{i=1}^N -  \begin{pmatrix}
    m_1 & m_2 & \ldots & m_N \\
    \vdots & \vdots &  &\vdots \\
    m_1 & m_2 & \ldots & m_N 
  \end{pmatrix},
\]
where  
\[
d_i = M + \s(p+1)y_i^p -  \s \th_i .
\]
Via routine algebra, we compute the Jacobian
\begin{equation*}\label{e:Hess}
\det D_\by \fF_{\bth}(\by) = \prod_{i=1}^N d_i -  \sum_{k=1}^N m_k \prod_{i \neq k} d_i = \prod_{i=1}^N d_i \times \left( 1 - \sum_{k=1}^N \frac{m_k}{d_k} \right).
\end{equation*}
In order to determine the sign, let us first show that all $d_i >0$. Assume that $d_i\leq 0$ for some $i$. Then
\[
(p+1) y_i^p \leq \th_i - \frac{M}{\s}.
\]
This, in turn, implies that $\th_i - \frac{M}{\s} \geq 0$. Now, using the bound \eqref{e:ylower} we obtain
\[
(p+1) \left(\th_i + \frac{m_{\geq i} - M}{\s} \right) \leq \th_i - \frac{M}{\s},
\]
which implies
\[
p \left( \th_i - \frac{M}{\s} \right) \leq - \frac{m_{\geq i}}{\s} (p+1) < 0,
\]
which is a contradiction with the above. 

Thus, the sign of the Jacobian is determined by the sign of $1 - \sum_{k=1}^N \frac{m_k}{d_k} $.  Directly from the equations \eqref{e:yi} we obtain
\[
\frac{1}{d_i} = \frac{y_i}{M \bar{y}} - p\s \frac{y_i^{p+1}}{M \bar{y}d_i}, \qquad \bar{y} = \frac{1}{M} \sum_i m_i y_i.
\]
Then
\[
 \sum_{i=1}^N \frac{m_i}{d_i} = 1 - \frac{p \s }{M\bar{y}} \sum_{i=1}^N m_i \frac{y_i^{p+1}}{d_i} < 1,
\]
which proves the desired.

Stability follows in the same fashion once we observe that the upper left minors $M_n$, $n<N$, are given by a similar expression
\[
M_n = \prod_{i=1}^n d_i \times \left( 1 - \sum_{k=1}^n \frac{m_k}{d_k} \right).
\]
We have in this case
\[
 \sum_{k=1}^n \frac{m_k}{d_k}  < \frac{1}{\bar{y}} \frac{1}{M}\sum_{k=1}^n m_k y_k < 1.
\]

We will now focus on the Brouwer topological degree of the map $\fF$ at zero (see \cite{Cronin} for the background material). To define the degree properly, we will restrict $\fF$ to a wedge region $\cW$.
Let us denote
\begin{equation}\label{e:m-prod}
\lan \by,\bz \ran = \sum_{i=1}^N m_i y_i z_i, \quad \|\by\|_p^p =  \sum_{i=1}^N m_i y^p_i.
\end{equation}
We define 
\[
\cW = \left\{\by: y_i \geq 0,\  \e \leq \|\by\|_\infty,\  \|\by\|_{p+1} \leq R \right\},
\]
where $R>0$ is large and $\e$ is small to be determined momentarily. We verify that the image of the boundary does not contain the origin, $0 \not \in \fF(\p \cW)$. Indeed, if $y_i = 0$ for some $i$, then $\fF^i = \bar{y} >0$. Let us now compute the momentum
\[
\sum_{i=1}^N m_i \fF^i (\by) = - \s \lan \bth,\by\ran + \s \|\by\|_{p+1}^{p+1}.
\]
If $\|\by\|_{p+1} = R$,  we have the bound
\[
\geq \s \|\by\|_{p+1}^{p+1} - \s \|\by\|_{p+1}\|\bth\|_{\frac{p+1}{p}} >0,
\]
provided $R$ is large enough. If $\|\by\|_\infty = \e$, then
\[
\leq -\s \th_- \|\by\|_1 + \s \e^p \|\by\|_1 < 0,
\]
provided $\e$ is small enough.

Since the value $\mathbf{0}$ of the $\fF$-map is regular, the degree can be computed explicitly by
\begin{equation*}
	\deg\{\fF, \cW, \mathbf{0}\}  = \sum_{\by \in \fF^{-1}(\mathbf{0})} \sign\left( \det D_\by \fF(\by) \right).
\end{equation*}
Since all the signs of the Jacobian are positive, the uniqueness can be obtained by showing that $	\deg\{\fF, \cW, \mathbf{0}\}  = 1$. This is certainly true for any diagonal $\hat{\bth} = (\th,\ldots,\th)$ as we have a unique positively oriented solution in this case, see \eqref{e:minmax}.  Let us fix any such $\hat{\bth}$ and consider the homotopy of maps
\[
\fF_\t = \fF_{\t \bth + (1-\t) \hat{\bth}, \bmm,\s}, \quad 0\leq \t \leq 1.
\]
We have verified above that $\mathbf{0} \not \in \fF_{\t}(\p \cW)$ for any $\t$. Thus, the Invariance under Homotopy Principle applies. As a consequence, 
\[
\deg\{\fF_{\bth,\bmm,\s}, \cW, \mathbf{0}\} = \deg\{\fF_{\hat{\bth},\bmm,\s}, \cW, \mathbf{0}\} = 1,
\]
and the proof of uniqueness is finished.

The smoothness of $\by^*$ as a function of $(\bth,\bmm, \s)$ follows directly from the non-degeneracy established above and the Implicit Function Theorem.
\end{proof}

\subsection{Gradient structure and convergence to the Nash equilibrium} As we already noted in the beginning of this section, the long time behavior of any sectorial solution to the full system \eqref{e:opinionV} reduces to the study of positive solutions to \eqref{e:fullY}. It is natural to expect that any solution to \eqref{e:fullY} converges to the unique Nash equilibrium $\by^*$. In fact, this would be a trivial application of the Lyapunov classical theory and exponential stability established in \prop{p:unique} if the initial conditions started in a small neighborhood of $\by^*$ and if $E(t)$ was already small. For general solutions, however, we resort to a hidden gradient structure of \eqref{e:fullY}.

First, let us establish boundedness. Indeed, by the energy estimates, and dropping the dissipation term coming from the alignment entirely, we obtain
\[
\ddt \|\by\|_2^2 \leq \s \th^+\|\by\|_2^2 - \s \|\by\|_{p+2}^{p+2} + E(t) + \|\by\|_2^2 \leq  \|\by\|_2^2(\s \th^++1 - \s \|\by\|_{2}^{p}) +  E(t).
\]
If at some point of time $\|\by\|_{2}^{p} > 2(\th^+ + 1/\s)$, then
\[
\ddt \|\by\|_2^2 \leq - (\s \th^++1 ) \|\by\|_2^2 + E(t) \leq -c_0 + E(t).
\]
Thus, starting from some time $T$, when $E(t) < c_0/2$, $t>T$, such an estimate would give a decaying energy. The standard maximum principle argument concludes the proof.

To get a better understanding of the long time dynamics of solutions we rescale the system and convert it to a gradient flow. Such structure, in fact, is apparent when all masses are equal, say $m_i = \frac{1}{N}$. Then 
\begin{equation}\label{e:gradY}
\ddt \by = - \n \Phi(\by) + \bE(t),
\end{equation}
for
\begin{equation*}\label{e:Phi}
\Phi(\by) = -\frac{1}{2N}( y_1 + \ldots + y_N)^2 - \frac12 \sum_i (\s \th_i - 1)y_i^2 + \frac{\s}{p+2} \sum_i y_i^{p+2}.
\end{equation*}
For general case, we introduce new variables
\[
z_i = \sqrt{m_i} y_i.
\]
The new system takes form
\begin{equation*}\label{e:z}
\ddt z_i =  \sum_j \sqrt{m_i m_j} z_j - M z_i + \frac{\s}{m_i^{p/2}}( m_i^{p/2} \th_i - z_i^p) z_i + E_i(t).
\end{equation*}
The core of the right hand side is given by $-\n \Phi$, where 
\[
\Phi(\bz) = -\frac12 \left( \sum_j \sqrt{ m_j} z_j \right)^2 + \frac{1}{2} \sum_j(\s \th_j - M) z_j^2  +  \frac{\s}{p+2} \sum_j \frac{z_j^{p+2}}{m_j^{p/2}}.
\]
The original system is now converted into a perturbed gradient flow 
\begin{equation}\label{e:fullZ}
\ddt \bz = - \n \Phi(\bz) + \bE(t).
\end{equation}
Note that the statement of \prop{p:unique} and boundedness of solutions to \eqref{e:fullZ} translates into the new system directly from the old one. Yet, the alignment structure of the new system has been destroyed. Our key observation is that for the convergence result this structure is no longer required.

We now proceed to proving that any positive solution to \eqref{e:fullZ} converges to the unique equilibrium $\bz^*$, $z^*_i = \sqrt{m_i} y^*_i$.  The proof is based on Lojasiewicz's gradient inequality, which we recall next.
\begin{theorem}[\cite{L}] Let $\Phi$ be a real analytic function in a neighborhood $U$. Then for any $\bz_0 \in U$ there are constants $c>0$ and $\delta\in(0,1]$ and $\mu \in[1/2,1)$ such that
\begin{equation}\label{Lojasiewicz}
\|\n\Phi(\bz)\|\geq c |\Phi(\bz)-\Phi(\bz_0)|^{\mu}, \qquad \forall\, \bz \in U \text{ such that } \|\bz-\bz_0\|\leq \delta.
\end{equation}
\end{theorem}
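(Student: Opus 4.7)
Since Lojasiewicz's gradient inequality is classical (cited to [L] and not reproved in the body of the paper), the plan is to sketch the standard proof from real semi-analytic geometry. A first reduction: if $\n\Phi(\bz_0)\neq 0$, continuity gives $\|\n\Phi(\bz)\|\geq c_0$ on a small ball, while $|\Phi(\bz)-\Phi(\bz_0)|^\mu\to 0$ as $\bz\to\bz_0$, so any $\mu\in[1/2,1)$ works trivially. Subtracting a constant, I may therefore assume $\n\Phi(\bz_0)=0$ and $\Phi(\bz_0)=0$, so the question is genuinely local at a critical point.

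The main argument is by contradiction. If no admissible $\mu$ exists then for each $\mu<1$ and $\e>0$ the semi-analytic set
\[
A_{\mu,\e}=\{\bz:\ \|\bz-\bz_0\|\leq\delta_0,\ \|\n\Phi(\bz)\|^2\leq\e\,|\Phi(\bz)|^{2\mu},\ \Phi(\bz)\neq 0\}
\]
has $\bz_0$ as a limit point. The Curve Selection Lemma of Bruhat--Cartan--Lojasiewicz then furnishes a real analytic arc $\gamma:[0,\eta)\to\R^N$ with $\gamma(0)=\bz_0$ and $\gamma((0,\eta))\subset A_{\mu,\e}$. Since $\Phi\circ\gamma$ is analytic and not identically zero on $(0,\eta)$, I can write Taylor expansions $\Phi(\gamma(t))=a\,t^k+O(t^{k+1})$ with $a\neq 0$, $k\geq 1$, and $\dot\gamma(t)=b\,t^{\ell-1}+O(t^\ell)$ with $b\neq 0$, $\ell\geq 1$. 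Differentiating, $akt^{k-1}+O(t^k)=\langle\n\Phi(\gamma(t)),\dot\gamma(t)\rangle$, so Cauchy--Schwarz forces $\|\n\Phi(\gamma(t))\|\geq c_1 t^{k-\ell}$ for small $t$. Membership in $A_{\mu,\e}$ on the other hand yields $\|\n\Phi(\gamma(t))\|\leq c_2 t^{k\mu}$. Matching orders as $t\to 0^+$ requires $k\mu\geq k-\ell$, i.e.\ $\mu\leq 1-\ell/k<1$, a specific strict sub-unit bound on $\mu$.

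The crux of the difficulty, and the main obstacle, is \emph{uniformity}: the bound $\mu\leq 1-\ell/k$ depends on the pair $(k,\ell)$, which depends on the arc, which in turn depends on $\mu$. To extract a single $\mu^\star\in[1/2,1)$ that rules out every curve simultaneously, one invokes the structure theorem for real semi-analytic sets: near $\bz_0$ only finitely many pairs $(k,\ell)$ arise as orders along analytic arcs lying in the strata of a semi-analytic stratification of a small neighborhood. Equivalently, after a suitable real-analytic blow-up/resolution at $\bz_0$ one applies Lojasiewicz's separation inequality to the disjoint compact semi-analytic sets $\{\|\n\Phi\|=0\}$ and $\{|\Phi|=r\}$ to obtain the uniform estimate. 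This stratification/resolution step is the deep input from real analytic geometry and is the reason the result is cited rather than reproved in the paper; once it is granted, the Puiseux-order matching above supplies the desired $\mu$ as the maximum of $1-\ell/k$ over the finitely many possibilities (enlarged if necessary to lie in $[1/2,1)$ as stated).
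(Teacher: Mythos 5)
The paper does not prove this theorem; it is a verbatim citation of Lojasiewicz's classical gradient inequality to \cite{L} (and the bibliography also contains \cite{Huang} for background), so there is no internal proof for your proposal to match. Your sketch is a reasonable outline of the standard argument: the reduction to $\n\Phi(\bz_0)=0$, the Curve Selection Lemma, and the Puiseux-order matching $\|\n\Phi(\gamma(t))\|\gtrsim t^{k-\ell}$ versus $\|\n\Phi(\gamma(t))\|\lesssim t^{k\mu}$ along a bad arc are exactly the ingredients Lojasiewicz used, and you correctly identify uniformity as the crux and the reason the result is cited rather than reproved. One technical point worth flagging: the set $A_{\mu,\e}$ is semi-analytic (so that curve selection applies) only if the exponent $2\mu$ is rational, so one should work with rational $\mu$ throughout and pass to the general case at the end; this is standard but you should say it. A more substantive caveat is that your uniformity step, phrased as ``only finitely many pairs $(k,\ell)$ arise along analytic arcs,'' is not a precise theorem one can cite off the shelf; moreover the general Lojasiewicz inequality $|g|^N\le C|f|$ for pairs of analytic functions, which is the usual packaged form of the semi-analytic machinery, does \emph{not} by itself yield the crucial bound $\mu<1$ (it gives $\mu=N/2$ with $N$ potentially large). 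The proof that the exponent can be taken strictly below $1$ genuinely requires the arc-order argument you describe together with a bound on the vanishing order $k$ furnished by semi-analytic stratification or resolution of singularities --- this is precisely the deep input in \cite{L}, and a careful write-up would defer to that reference or to \cite{Huang} rather than assert the finiteness claim informally.
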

Our potential $\Phi$ is real analytic in the positive sector, so the result applies. 

So, let us consider a positive solution $\bz \in \R^N_+$ to \eqref{e:fullZ}. Since every such solution is bounded, $\bz(t)$ has an accumulation point $\bz_0$. It is easy to see that $\bz_0 \in \R^N_+$. Indeed, by the sectorial maximum principle for the original system, the corresponding $\by$-solution will remain in a smaller sector $\Sigma \ss \R^N_+$ which does not intersect the coordinate planes. The rescaled solution $\bz$ will clearly remain in the same sector. The computation in the proof of \prop{p:unique} also shows that $\by$, and hence $\bz$ cannot approach $\mathbf{0}$. Hence, $\bz_0 \in \Sigma \backslash \{ \mathbf{0} \}$.

Let us consider an increasing sequence of times $\{t_n: n\geq 1\}$ for which $\bz(t_n) \to \bz_0$. We show that $\bz(t)$ eventually enters and remains in $B_r(\bz_0):=\{\bz\in\R^N: \|\bz-\bz_0\|<r\}$. Since $r$ is arbitrarily small, it will imply that $\bz(t) \to \bz_0$. We will also show along the way that $\ddt \bz \to 0$ along some sequence of times. This will establish that $\n \Phi (\bz_0) = 0$, and hence $\bz_0 = \bz^*$.

The proof goes by establishing a control over the length of the orbit $\bz(t)$ near the accumulation point $\bz_0$.  We proceed with an estimate on the arc-length functional, which is a  local version of Simon's result \cite{Simon}. 

Let us denote
\begin{equation*}\label{e:H}
H(t):=\Phi(\bz(t))+\frac{3}{4}\int_{t}^{\infty}\|\bE(s)\|^2\ds.
\end{equation*}

\begin{lemma}\label{l:al}
As long as $\bz(t)\in B_{\delta}(\bz_0)$ for $t'\leq t \leq t''$, we have
\begin{equation*}\label{e:L}
\int_{t'}^{t''}\|\dot{\bz}(s)\|\ds \leq 4\int_{H(t'')}^{H(t')}\frac{1}{c|\xi-\Phi(\bz_0)|^{\mu}}\dxi+\int_{t'}^{t''}\tilde{E}(s)\ds
\end{equation*}
where $\tilde{E}$ is an exponentially decaying quantity.
\end{lemma}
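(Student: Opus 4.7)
The plan is to mimic the classical \L{}ojasiewicz--Simon convergence scheme, adapted to the exponentially perturbed gradient system \eqref{e:fullZ}. I will first show $H$ is non-increasing, then reduce the arc-length to an integral over the values of $H$, and finally apply the \L{}ojasiewicz inequality after a change of variables, splitting off an exponentially small remainder.

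\textbf{Step 1: Monotonicity of $H$.} Differentiating along \eqref{e:fullZ} and applying Young's inequality on the cross term $\n\Phi\cdot\bE$,
\[
\dot{H}=-\|\n\Phi\|^2+\n\Phi\cdot\bE-\tfrac{3}{4}\|\bE\|^2\leq -\tfrac{1}{2}\|\n\Phi\|^2-\tfrac{1}{4}\|\bE\|^2\leq 0.
\]
The coefficient $\tfrac{3}{4}$ in the definition of $H$ is calibrated precisely so the cross term is absorbed with room to spare; in particular $|\dot{H}|\geq \tfrac{1}{2}\|\n\Phi\|^2$.

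\textbf{Step 2: Splitting and change of variables.} The system yields $\|\dot{\bz}\|\leq \|\n\Phi\|+\|\bE\|$, and the $\|\bE\|$ part is exponentially small and feeds directly into $\int\tilde{E}\ds$. For the principal piece I substitute $\xi=H(s)$: on intervals where $\|\n\Phi\|>0$ the map $s\mapsto H(s)$ is strictly decreasing, and $|\dot{H}|\geq \tfrac{1}{2}\|\n\Phi\|^2$ gives $\ds\leq \frac{2}{\|\n\Phi\|^2}\,|d\xi|$, so that
\[
\int_{t'}^{t''}\|\n\Phi\|\ds \leq \int_{H(t'')}^{H(t')}\frac{2}{\|\n\Phi\|}\,d\xi.
\]
Intervals where $\|\n\Phi\|\equiv 0$ contribute nothing to the left-hand side.

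\textbf{Step 3: \L{}ojasiewicz transfer.} Inside $B_{\delta}(\bz_0)$, inequality \eqref{Lojasiewicz} gives $\|\n\Phi\|\geq c|\Phi(\bz)-\Phi(\bz_0)|^{\mu}$. To convert this into the desired $|\xi-\Phi(\bz_0)|^{\mu}=|H-\Phi(\bz_0)|^{\mu}$, I split times via
\[
H(s)-\Phi(\bz_0)=(\Phi(\bz(s))-\Phi(\bz_0))+\tfrac{3}{4}\int_s^{\infty}\|\bE(\tau)\|^2\,\dtau.
\]
When $|\Phi-\Phi(\bz_0)|\geq \tfrac{1}{2}|H-\Phi(\bz_0)|$, \L{}ojasiewicz gives $\tfrac{2}{\|\n\Phi\|}\leq \tfrac{2\cdot 2^{\mu}}{c|\xi-\Phi(\bz_0)|^{\mu}}\leq \tfrac{4}{c|\xi-\Phi(\bz_0)|^{\mu}}$ (using $\mu\leq 1$), reproducing the integrand in the target bound. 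In the complementary regime the tail dominates and $|H-\Phi(\bz_0)|\lesssim \int_s^{\infty}\|\bE\|^2\,\dtau$ is itself exponentially small; the resulting arc-length contribution on this "bad" set will be absorbed into the generic exponentially decaying $\tilde{E}(s)$.

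\textbf{Main obstacle.} The delicate point is precisely this second regime: since $\bE$ is not sign-definite, $\Phi(\bz)$ may oscillate and even dip below $\Phi(\bz_0)$, so one cannot simply substitute $|H-\Phi(\bz_0)|^{\mu}$ for $|\Phi-\Phi(\bz_0)|^{\mu}$ in \L{}ojasiewicz. The resolution combines (i) the characterization of the bad set through $|H-\Phi(\bz_0)|\lesssim \int_s^{\infty}\|\bE\|^2\,\dtau$, and (ii) the integrated energy bound $\int \|\n\Phi\|^2\ds\leq 2(H(t')-H(t''))$ coming from Step 1, to bound the bad contribution by an exponentially small quantity that is merged into $\tilde{E}$. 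With this dichotomy in hand, the chain Steps~1--3 assembles into the claimed inequality.
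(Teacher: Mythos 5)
Your Step~1 is correct (and in fact gives a cleaner bound than the paper's mixed form $-\dot H \geq \tfrac14\|\n\Phi\|\,\|\dot\bz\|$), and Step~2's change of variables is essentially the paper's auxiliary function $\Psi(H(t))$ in disguise. The argument breaks, however, exactly where you flag the ``main obstacle'': the bad regime $|\Phi(\bz(s))-\Phi(\bz_0)|<\tfrac12|H(s)-\Phi(\bz_0)|$. There, \L{}ojasiewicz yields no usable lower bound on $\|\n\Phi(\bz(s))\|$ --- it may vanish --- while $|\xi-\Phi(\bz_0)|$ need not, so the pointwise inequality $\tfrac{2}{\|\n\Phi\|}\leq\tfrac{4}{c|\xi-\Phi(\bz_0)|^{\mu}}$ simply fails on that set. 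The two tools you propose to patch this do not close the gap: since $H$ is monotone and $H(B)\subset[\Phi(\bz_0),\Phi(\bz_0)+\e(t'))$ with $\e(t')\lesssim e^{-\alpha t'}$, the energy bound indeed gives $\int_B\|\n\Phi\|^2\ds\lesssim e^{-\alpha t'}$, but Cauchy--Schwarz then produces $\int_B\|\n\Phi\|\ds\lesssim |B|^{1/2}e^{-\alpha t'/2}$, and $|B|$ can be as large as $t''-t'$. Such a bound cannot be dominated by $\int_{t'}^{t''}\tilde E(s)\ds$ for a genuine exponentially decaying function $\tilde E(s)$ (whose integral is bounded uniformly in $t''$), so the dichotomy does not assemble into the claimed inequality.

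The idea you are missing, and what makes the paper's argument close in one line, is the elementary sub-additivity $|a+b|^{\mu}\leq|a|^{\mu}+|b|^{\mu}$ for $0<\mu\leq 1$. Applied to $H(t)-\Phi(\bz_0)=\big(\Phi(\bz(t))-\Phi(\bz_0)\big)+\tfrac34\int_t^{\infty}\|\bE\|^2\ds$, it yields the \emph{pointwise} bound $\psi(H(t))\leq c|\Phi(\bz(t))-\Phi(\bz_0)|^{\mu}+c\big(\tfrac34\int_t^{\infty}\|\bE\|^2\big)^{\mu}\leq\|\n\Phi(\bz(t))\|+A(t)$ with $A(t)$ exponentially small, after invoking \L{}ojasiewicz. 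Plugging this into $-\ddt\Psi(H)=\tfrac{-\dot H}{\psi(H)}\geq\tfrac{\frac14\|\n\Phi\|\,\|\dot\bz\|}{\|\n\Phi\|+A}$ and using the algebraic identity $\frac{\|\n\Phi\|\,\|\dot\bz\|}{\|\n\Phi\|+A}=\|\dot\bz\|-\frac{A\|\dot\bz\|}{\|\n\Phi\|+A}$ together with $\|\dot\bz\|\leq\|\n\Phi\|+\|\bE\|$ peels off an exponentially small remainder $A(t)+\|\bE(t)\|=\tilde E(t)$ uniformly in $t$, with no good/bad set split and no measure-theoretic control of a bad set required. Integrating then gives the lemma directly. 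You should replace your dichotomy with this sub-additivity step; the rest of your outline is sound.
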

\begin{proof}
We have
\begin{equation}\label{dotH}
-\dot{H}(t)=-\langle\n\Phi(\bz(t)),\dot{\bz}(t)\rangle+\frac{3}{4}\|\bE(t)\|^2\geq  \frac{1}{4}\|\n\Phi(\bz(t))\| \|\dot{\bz}(t)\|.
\end{equation}
So,  the  function $H(\cdot)$  is  non-increasing. To continue, we define another auxiliary function
\[
\Psi(x):=\int_{0}^{x}\frac{1}{\psi(\xi)}\dxi,
\]
where
\[
\psi(\xi):=c|\xi-\Phi(\bz_0)|^{\mu}.
\]
Since $\mu <1$, we have 
\begin{equation}\label{e:psimu}
\psi(H(t))\leq c|\Phi(\bz(t))-\Phi(\bz_0)|^{\mu}+c\left( \frac34 \int_{t}^{\infty}\|\bE(s)\|^2\ds\right)^{\mu}.
\end{equation}

Let us compute 
\[
\ddt \Psi(H(t))=\dot{\Psi}(H(t))\dot{H}(t)=\frac{\dot{H}(t)}{\psi(H(t))}.
\]
Combining with \eqref{dotH} and \eqref{e:psimu}, 
\[
-\ddt \Psi(H(t))\geq  \frac{1}{4c }\cdot \frac{  \|\n\Phi(\bz(t))\| \|\dot{\bz}(t)\|}{|\Phi(\bz(t))-\Phi(\bz_0)|^{\mu}+ \left( \frac{3}{4}\int_{t}^{\infty}\|\bE(s)\|^2\ds\right)^{\mu}},
\]
and thus,  by the Lojasiewicz gradient inequality \eqref{Lojasiewicz} we get
\[
-\ddt \Psi(H(t))\geq \frac{1}{4} \cdot \frac{\|\n\Phi(\bz(t))\| \|\dot{\bz}(t)\|}{\|\n\Phi(\bz(t))\|+c\left(\frac34 \int_{t}^{\infty}\|\bE(s)\|^2\ds\right)^{\mu}} \qquad \text{for all } t\in (t',t'').
\]
Hence, for all $t\in (t',t'')$ we obtain
\begin{equation}\label{step1}
-4 \ddt \Psi(H(t))\geq \|\dot{\bz}(t)\|-c\left( \frac34 \int_{t}^{\infty}\|\bE(s)\|^2\ds\right)^{\mu}\frac{ \|\dot{\bz}(t)\|}{\|\n\Phi(\bz(t))\|+c\left( \frac34 \int_{t}^{\infty}\|\bE(s)\|^2\ds\right)^{\mu}}.
\end{equation}
To estimate the second term on the right-hand side we use the system \eqref{e:fullZ} to get
\begin{equation}\label{step2}
\frac{ \|\dot{\bz}(t)\|}{\|\n\Phi(\bz(t))\|+c\left( \frac34 \int_{t}^{\infty}\|\bE(s)\|^2\ds\right)^{\mu}}\leq 1+\frac{\|\bE(t)\|}{c\left( \frac34 \int_{t}^{\infty}\|\bE(s)\|^2\ds\right)^{\mu}}.
\end{equation}
Combining \eqref{step1} and \eqref{step2} gives 
\[
-4 \ddt \Psi(H(t))\geq \|\dot{\bz}(t)\|-c \left( \frac34 \int_{t}^{\infty}\|\bE(s)\|^2\ds\right)^{\mu}-\|\bE(t)\| \qquad \text{for all } t\in (t',t'').
\]
Denoting
\begin{equation*}\label{e:tildeE}
\tilde{E}(t):=c\left( \frac34\int_{t}^{\infty}\|\bE(s)\|^2\ds\right)^{\mu}+\|\bE(t)\|,
\end{equation*}
we obtain
\begin{equation}\label{step3}
\|\dot{\bz}(t)\|\leq -4 \ddt \Psi(H(t))+\tilde{E}(t) \qquad \text{for all } t\in (t',t'').
\end{equation}
Integrating \eqref{step3} over $(t',t'')$ finishes the proof.\end{proof}

\begin{proof}[Conclusion of the proof of \thm{t:unique}]
To conclude the proof of convergence we argue as follows. Let us fix an arbitrary $r<\d$ and consider a remote time $t_n\gg 1$ such that 
\[
\bz(t_n)\in B_{\frac{r}{3}}(\bz_0), \qquad 4\int_{\Phi(\bz_0)}^{H(t_n)}\frac{1}{c|\xi-\Phi(\bz_0)|^{\mu}}\dxi <\frac{r}{3}, \qquad \int_{t_n}^{\infty}\tilde{E}(s)\ds < \frac{r}{3}.
\]
We show that the entire trajectory for $t> t_n$ lies in $B_r(\bz_0)$. By contradiction, suppose not,
and let $t_n+\tilde{t}$ be the smallest $\tilde{t} > 0$ such that $\|\bz(t_n+\tilde{t})-\bz_0\|=r$. Then $\bz(t)$ lies in $B_r(\bz_0)$ for all $t\in(t_n,t_n+\tilde{t}).$ So, applying \lem{l:al} with $t'=t_n$ and $t''=t_n+\tilde{t}$ we obtain
\[
\|\bz(t_n+\tilde{t})-\bz_0\| \leq \|\bz(t_n+\tilde{t})-\bz(t_n)\|+\|\bz(t_n)-\bz_0\|\leq \int_{t_n}^{t_n+\tilde{t}}\|\dot{\bz}(s)\|\ds +\frac{r}{3}<r
\]
which is a contradiction. Thus $\bz(t)$ remains in $B_r(\bz_0)$ for all $t\in[t_n,\infty)$.

To conclude that $\bz_0$ is the equilibrium we note that the above argument implies that  
\[
\int_{t_n}^{\infty}\|\dot{\bz}(s)\|\ds <\infty.
\]
Thus, $\dot{\bz}(s_n) \to 0$, and hence $\n \Phi(\bz(s_n)) \to 0 = \n \Phi(\bz_0)$.

The proof of \thm{t:unique} is complete. 
\end{proof}

\subsection{Further structural properties of Nash equilibria}

Although we can't compute the equilibrium $\by^*$ explicitly, certain structural properties of it can be provided. First, there is an order of shifts of $y_i$'s relative to $\th_i$'s. 

\begin{lemma}
	There exists an index $1\leq i_0\leq N$ such that 
	\begin{equation*}
		\begin{split}
		\th_1 \leq y_1^p, &\ \ldots\ , \th_{i_0} \leq y_{i_0}^p,\\
		y_{i_0} \leq&\ \bar{y} \leq y_{i_0 + 1},\\
		y_{i_0 + 1}^p \leq \th_{i_0 + 1}, &\ \ldots\ , y_N^p \leq \th_N.
		\end{split}
	\end{equation*}
\end{lemma}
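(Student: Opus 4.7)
The plan is to extract a sign equivalence directly from the equilibrium equation \eqref{e:yi}, and then combine it with the monotonicity $y_1 \leq y_2 \leq \ldots \leq y_N$ already established in \lem{l:structure}. Rewriting \eqref{e:yi} by separating out $M\bar y = \sum_k m_k y_k$ gives the compact identity
\begin{equation*}
\sigma(y_i^p - \th_i) y_i = M(\bar y - y_i),
\end{equation*}
valid for every $i$. Since $y_i > 0$ and $\sigma, M > 0$, this yields the crucial sign equivalence
\begin{equation*}
y_i^p \geq \th_i \ \iff\ y_i \leq \bar y, \qquad y_i^p \leq \th_i \ \iff\ y_i \geq \bar y.
\end{equation*}

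Next I would introduce the cutoff index. Because $y_1\leq \ldots\leq y_N$ and $\bar y$ is a convex combination of the $y_i$'s with positive weights $m_i/M$, we have $y_1 \leq \bar y \leq y_N$. Define $i_0$ as the largest index with $y_{i_0} \leq \bar y$; then $1 \leq i_0 \leq N$, and by maximality $y_{i_0+1} > \bar y$ whenever $i_0 < N$. Monotonicity gives $y_i \leq \bar y$ for all $i \leq i_0$, and the sign equivalence above immediately yields $y_i^p \geq \th_i$ for those $i$. Symmetrically, $y_i \geq \bar y$ for $i > i_0$ implies $y_i^p \leq \th_i$ for those $i$, finishing the lemma.

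There is no real obstacle here; the only point requiring a brief comment is the degenerate case $i_0 = N$, which forces $y_N \leq \bar y$ together with $\bar y \leq y_N$, hence $y_1 = \ldots = y_N = \bar y$; by the last part of \lem{l:structure} all $\th_i$ are then equal and the statement reduces to the trivial identity $y_i^p = \th_i$ for all $i$, with the second and third lines of the claim vacuous. Thus the main content of the lemma is really just the sign equivalence extracted from \eqref{e:yi}, applied along the $\th$-ordered sequence.
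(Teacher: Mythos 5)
Your proof is correct and follows essentially the same route as the paper: both extract the sign equivalence $y_i^p \gtrless \th_i \iff y_i \lessgtr \bar y$ from a rearrangement of \eqref{e:yi} and then invoke the monotone ordering of the $y_i$'s from \lem{l:structure}. The only cosmetic difference is that you state the identity in the linear form $\s(y_i^p-\th_i)y_i = M(\bar y - y_i)$, whereas the paper writes it as a convex-combination fraction; yours is arguably the more transparent presentation, and your remark on the degenerate case $i_0=N$ is a welcome clarification the paper omits.
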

In other words, the opinions below average undergo right shift from agent's convictions, and opinions above average 
undergo left shift.
\begin{proof}
	The result follows by rewriting \eqref{e:yi} as follows
\[
	\frac{M \bar{y} + \s \th_i y_i}{M+ \s \th_i} = \frac{M+\s y_i^p  }{M+\s \th_i}y_i.
\]
So, if $y_i^p \leq \th_i$, then
\[
\frac{M \bar{y} + \s \th_i y_i}{M + \s \th_i} \leq y_i,
\]
which implies $y_i \geq \bar{y}$. Similarly if $y_i^p \geq \th_i$, then $y_i \leq \bar{y}$. So, both implications can be reversed as well. By monotonicity of $y_i$'s, there exists a unique index  $i_0$  for which 
\[
y_1\leq \ldots \leq y_{i_0} \leq \bar{y}  \leq y_{i_0 + 1} \leq \ldots \leq y_N.
\]
 Then the shift inequalities hold as stated.  
\end{proof}

Next, asymptotic behavior of Nash equilibria  can be described in the limit as the friction coefficient $\s \to \infty$ or $\s \to 0$. Indeed, if $\s \to \infty$, directly from   \eqref{e:yi} we can see that any solution converges to its conviction values
$ \by^*(\s)  \to \bth^{1/p}$.

On the other hand, as $\s \to 0$, the alignment force becomes dominant as it is expected that the opinions will reach a consensus. That consensus can be determined as follows.  Since the coordinates of $\by^*$ remain within the fixed bounds of $\th_1, \th_N$ as we let $\s \to 0$, we can see directly from \eqref{e:yi} that for all $i =1,\ldots,N$
\[
 \left| y_i(\s) - \bar{y}(\s) \right|  \to 0.
\]
It remains to determine the limit of $\bar{y}(\s)$. So, adding up all the equations in system \eqref{e:yi} premultiplied by the masses $m_i$ we obtain, in the notation of \eqref{e:m-prod},
\[
\lan \by^*(\s), \bth \ran = \| \by^*(\s)\|_{p+1}^{p+1}.
\]
Thus, as $\s \to 0$, we have
\[
\bar{y}(\s) \bar{\th}  = \bar{y}(\s)^{p+1} + o(1).
\]
Since the average $\bar{y}(\s)$ stays uniformly bounded from zero, we obtain 
\[
\bar{\th}  = \bar{y}(\s)^{p} + o(1),
\]
and hence,
\[
\bar{y}(\s)\to  \bar{\th}^{1/p}.
\]
We summarize the above in the following lemma.

\begin{lemma}
We have the following asymptotic behavior of the equilibrium values:
\begin{align*}
\lim_{\s \to \infty} y_i(\s)  &= \th_i^{1/p}, \qquad \forall i = 1,\ldots,N,\\
\lim_{\s \to 0} y_i(\s)  &=  \bar{\th}^{1/p}, \qquad \forall i = 1,\ldots,N.
\end{align*}
\end{lemma}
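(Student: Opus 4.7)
The plan is to leverage the uniform a priori bounds already established on the equilibrium and pass to limits in \eqref{e:yi} in two regimes. First, from \eqref{e:minmax} each coordinate satisfies $(\min_k \th_k)^{1/p} \le y^*_i(\s) \le (\max_k \th_k)^{1/p}$ uniformly in $\s > 0$, so the family $\{\by^*(\s)\}_{\s>0}$ lives in a fixed compact subset of $\R^N_+$ bounded away from the coordinate planes. Since \prop{p:unique} gives uniqueness of the equilibrium for each $\s$, it suffices to identify the limit of any convergent subsequence.

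For the regime $\s \to \infty$, I would divide \eqref{e:yi} by $\s$ to obtain
\[
\frac{1}{\s}\sum_{k=1}^N m_k(y_k^*(\s) - y_i^*(\s)) + (\th_i - (y_i^*(\s))^p)\, y_i^*(\s) = 0.
\]
The alignment term is $O(1/\s)$ thanks to the uniform bounds, so along any convergent subsequence the limit $\bar{y}_i$ satisfies $(\th_i - \bar{y}_i^p)\bar{y}_i = 0$. Since $\bar{y}_i \ge (\min_k \th_k)^{1/p} > 0$, we conclude $\bar{y}_i = \th_i^{1/p}$, and uniqueness upgrades subsequential convergence to full convergence.

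For the regime $\s \to 0$, rewrite \eqref{e:yi} as $M(\bar{y}(\s) - y_i^*(\s)) = -\s (\th_i - (y_i^*(\s))^p) y_i^*(\s)$. The right-hand side is $O(\s)$ uniformly in $i$ by the bounds, so
\[
\max_i |y_i^*(\s) - \bar{y}(\s)| \le C\s \longrightarrow 0,
\]
i.e.\ consensus emerges. To identify the common limit, I would take the mass-weighted sum of \eqref{e:yi}; the alignment contribution $\sum_{i,k} m_i m_k (y_k - y_i)$ cancels by symmetry, leaving the identity $\langle \by^*(\s), \bth \rangle = \|\by^*(\s)\|_{p+1}^{p+1}$ already present as \eqref{e:m-prod}. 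Substituting $y_i^*(\s) = \bar{y}(\s) + O(\s)$ and dividing by $M$ yields $\bar{y}(\s)\bar{\th} = \bar{y}(\s)^{p+1} + O(\s)$. The lower bound $\bar{y}(\s) \ge (\min_k \th_k)^{1/p} > 0$ lets me divide through, giving $\bar{y}(\s)^p \to \bar{\th}$ and hence $y_i^*(\s) \to \bar{\th}^{1/p}$ for every $i$.

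No step looks like a serious obstacle, since everything is driven by the already-established uniform bounds and the momentum identity; the only mild subtlety is ensuring uniform positivity of $\bar{y}(\s)$ when $\s \to 0$ so that division is legitimate, which is immediate from \eqref{e:minmax}.
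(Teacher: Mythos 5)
Your proposal is correct and follows essentially the same route as the paper: use the uniform bounds \eqref{e:minmax} to confine $\by^*(\s)$ to a fixed compact region away from the origin, conclude $\th_i - (y_i^*)^p \to 0$ directly as $\s \to \infty$, and for $\s \to 0$ first deduce consensus $\max_i|y_i^*(\s)-\bar y(\s)|\to 0$ from the $O(\s)$ bound on the friction term and then identify the common limit via the mass-weighted momentum identity $\lan \by^*(\s),\bth\ran = \|\by^*(\s)\|_{p+1}^{p+1}$. The only addition you make is an explicit compactness/subsequence argument, which the paper leaves implicit, and a minor misattribution (the momentum identity is not \eqref{e:m-prod} itself but is derived by summing \eqref{e:yi} in the notation of \eqref{e:m-prod}); neither affects the validity.
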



\end{document}